\documentclass{article}
\usepackage[T1]{fontenc}
\usepackage[utf8]{inputenc}
\usepackage{geometry}
\usepackage{amsmath}
\usepackage{amsthm}
\usepackage{thmtools,thm-restate}
\usepackage{algpseudocode}
\usepackage{caption}
\usepackage{float}
\usepackage{algorithm}
\usepackage[shortlabels]{enumitem}
\usepackage{stmaryrd}
\usepackage[dvipsnames]{xcolor} 
\usepackage[color=Orange!50!white, textwidth=22mm]{todonotes}
\usepackage[unicode=true, bookmarks=false, breaklinks=false,
 pdfborder={0 0 1},
 colorlinks=false,
 hidelinks=true]
 {hyperref}
\usepackage[noabbrev,capitalize,nameinlink]{cleveref}
\usepackage{comment}
\makeatletter
\theoremstyle{plain}
\newtheorem{thm}{\protect\theoremname}
\theoremstyle{plain}
\newtheorem{lem}[thm]{\protect\lemmaname}

\theoremstyle{remark}

\theoremstyle{plain}

\theoremstyle{plain}

\theoremstyle{plain}
\newtheorem{definition}[thm]{\protect\definitionname}
\theoremstyle{plain}

\usepackage{amssymb}

\makeatother

\providecommand{\claimname}{Claim}
\providecommand{\corollaryname}{Corollary}
\providecommand{\lemmaname}{Lemma}
\providecommand{\theoremname}{Theorem} 
\providecommand{\problemname}{Problem}
\providecommand{\definitionname}{Definition}

\renewcommand{\epsilon}{\varepsilon}

\newcommand{\mQ}{\mathcal{Q}}
\newcommand{\mS}{\mathcal{S}}

\algrenewcommand\algorithmicdo{:}
\algrenewcommand\algorithmicthen{:}
\newcounter{property}

\newcounter{stop}

\title{Sumsets of random sets}
\author{Rajko Nenadov \footnote{School of Mathematics and Statistics, University of Canterbury, New Zealand. Email: rajko.nenadov@canterbury.ac.nz Research supported by the New Zealand Marsden Fund.} \and Lander Verlinde \footnote{School of Mathematics and Statistics, University of Canterbury, New Zealand. Email: lander.verlinde@pg.canterbury.ac.nz}}
\date{}

\begin{document}

\maketitle
\begin{abstract}
Given $m \in \mathbb{N}$ and a $p$-random subset $A \subseteq \mathbb{N}$, we asymptotically determine $\log \Pr(|\mathbb{N} \setminus (A + A)| \ge m)$ for $p$ above the threshold for this property. The proof is based on a bespoke container argument.
\end{abstract}

\section{Introduction} 
Given a set $A \subseteq \mathbb N$, the set 
\[ A + A = \{ a+b : a,b \in A\} \]
is called the \emph{sumset} of $A$. The study of sumsets is one of the fundamental topics in additive combinatorics. For a given set $A$, the ratio $|A+A|/|A|$
is called the \emph{doubling factor} of $A$.  It is known that the doubling factor is minimised when $A$ is an arithmetic progression, and maximised when $A$ has no additive structure whatsoever. A foundational result by Freiman \cite{freiman1999structure} characterises the structure of a set with small doubling factor and relates it to a \textit{generalised arithmetic progression}. A generalised arithmetic progression of \emph{dimension~$d$} and \emph{volume~$N$} is a set of the form 
\[ A = \{ a + n_1 r_1 + \dots + n_d r_d: 0 \leq n_i < N_i \}, \]
where $N = \prod_{i=1}^d N_i$ and $a, r_1, \ldots, r_d \in \mathbb{N}$.
Freiman proved that if $A$ is a set such that $|A + A| \leq \lambda |A|$, then $A$ is contained in a generalised arithmetic progression of dimension at most $d(\lambda)$ and volume at most $f(\lambda) |A|$, where $d(\lambda)$ and $ f(\lambda)$ only depend on $\lambda$.  

Freiman's result was reproved by Ruzsa \cite{ruzsa1994generalized} with better bounds on $d(\lambda)$ and $f(\lambda)$, and ever since, significant further effort has been devoted to determining optimal bounds (see \cite{chang2002polynomial, sanders2012bogolyubov, sanders2013structure}). The current best bounds are by Schoen~\cite{schoen2011near}, 
who proved 
$$d(\lambda) = \lambda^{1+o(1)} \text{ and } f(\lambda) = \exp(\lambda ^{1+ o(1)}).$$ This is nearly optimal, which can be seen by considering a set without any additive structure,
requiring a generalised arithmetic progression with $d(\lambda) = \Theta(\lambda)$ and $f(\lambda) = 2^{\Theta(\lambda)}$.

The above results show that a set with small doubling factor has a lot of structure. Besides investigating the structure of the sets with small doubling factor, substantial research has been put towards counting such sets and further refining their \emph{typical} structure. Estimating the number of these sets has been inspired by the study of the clique number in random Cayley graphs \cite{green2005counting} and by the Cameron-Erd\H{o}s conjecture \cite{alon2014refinement,cameron1990number,green2004cameron,Sapzhenko2003}. 
Note that if $P \subseteq \{1, \ldots, n\}$ is an arithmetic progression of size $\lambda k / 2$ and $A \subseteq P$, then $|A+A| \leq \lambda k$. Therefore, there are at least $\binom{\lambda k /2}{k}$ sets $A \subseteq \{1, \ldots, n\}$ of size $|A| = k$ such that $|A + A| \le \lambda k$. In the proof of a refined version of the Cameron-Erd\H{o}s conjecture, Alon, Balogh, Morris and Samotij~\cite{alon2014refinement}  conjectured that these sets are the main `source' of sets with doubling factor at most $\lambda$. That is to say, they conjectured that for every $\delta > 0$, there exists $c > 0$ such that if $\lambda \le c k/\log n$, for some $k \in \mathbb{N}$, then there are at most
$$
 \binom{\lambda k/2 (1 + \delta)}{k}
$$
sets $A \subseteq \{1, \ldots, n\}$ with $|A| = k$ and $|A + A| \le \lambda|A|$. We remark that the original conjecture stated $k \gg \log n$ and $\lambda < c k$, for a small constant $c > 0$. However, as pointed out by Morris (see \cite{campos2020number}), the best one can hope for is $\lambda = O(k / \log n)$. 

A first step towards this conjecture was taken by Green and Morris \cite{green2016counting}, who showed it to be true for $\lambda = O(1)$.
Following improvements by Campos \cite{campos2020number} and Liu, Mattos and Szab\'{o} \cite{liu2024number}, the current range for which the conjecture is known to be true is $\lambda = o(k/\log^2 n)$. Further progress has recently been announced by Alon and Pham~\cite{alon2025random}.

A question that follows immediately is whether, in fact, a qualitative strengthening of the conjecture holds: Does $A$, chosen uniformly at random among all $k$-subsets of $\{1, \ldots, n\}$ subject to $|A + A| \le \lambda |A|$, typically belong to an arithmetic progression of length $\lambda k / 2 (1 + \delta)$? Note that this is significantly smaller than the `worst case' given by Freiman's Theorem. Campos, Collares, Morris, Morrison and Souza \cite{campos2022probabilistic} showed this to be the case for $\lambda = o(k^{1/31})$. More precisely, they showed that with probability $1 - \varepsilon$, such a set $A$ is contained in an arithmetic progression of length 
\begin{equation} \label{eq:error_term}
    \frac{\lambda k}{2} + O\left(\lambda^2 \log\left(\varepsilon^{-1}\right) + \lambda^{32}\right).
\end{equation} 
A key ingredient in the proof of this result is an estimate of the probability that the sumset of a random set misses many elements. This problem, which is rather interesting for its own sake, was first studied by Green and Morris \cite{green2016counting}. We now discuss this in detail.

Given $0 \le p \le 1$, a \emph{$p$-random subset of $\mathbb{N}$} is a set $A$ obtained by including each number from $\mathbb{N}$ with probability $p$, independently of all other elements. Green and Morris \cite{green2016counting} showed that if $A \subset \mathbb{N}$ is a $p$-random subset with $p = 1/2$, then for any sufficiently large integer $m$ we have
\begin{equation} \label{eq:green_morris}
    \Pr\left( \left| \mathbb{N} \setminus (A + A) \right| \ge m \right) \le (1 - p)^{m/2 - o(m)}.
\end{equation}
It was also noted in \cite{green2016counting} that the same bound holds for any constant $p$. Campos, Collares, Morris, Morrison, and Souza  \cite{campos2022probabilistic} further showed that \eqref{eq:green_morris} holds for $p \gg m^{-1/8}$.
Note that \eqref{eq:green_morris} is optimal up to the $o(m)$ term in the exponent. Namely,
$$
    \Pr\left( A \cap \{1, \ldots, m/2\} = \emptyset \right) = (1 - p)^{m/2},
$$
and this event implies $|\mathbb{N} \setminus (A+A)| \ge m$. Our contribution is to fully settle the range of $p$.

\begin{thm}
\label{thm: missing naturals}
    There exists $C > 1$ such that the following holds. Let $m \in \mathbb{N}$ and $\varepsilon > 0$, and suppose 
    $$
        \frac{C \log (1/\varepsilon)}{\sqrt{\varepsilon^3 m}} \le p \le 1/2. 
    $$
    Let $A \subseteq \mathbb{N}$ be a $p$-random subset. Then, 
    $$  
        \Pr\bigg( \big| \mathbb{N} \setminus (A+A) \big| \ge m  \bigg) < (1-p)^{m/2 - \varepsilon m}. 
    $$
\end{thm}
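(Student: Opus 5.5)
The plan is a container-style union bound over the ``witness'' structure of $A$ on an initial segment. Write $B = \mathbb{N}\setminus(A+A)$. All elements of $B$ lie below $2\max$ of a small gap, so the event depends only on $A \cap [1, N]$ for some $N = O(m/p)$ up to negligible error. First we truncate: if $B$ contains an element larger than $N$, then $A$ misses a long interval. The probability of this is at most $(1-p)^{\Omega(N)}$, which is far below the target. So we may assume $B \subseteq [1,N]$ and $|B\cap[1,N]| \ge m$.

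The key observation is that $x \notin A+A$ forces, for every $a \in A$ with $a < x$, that $x - a \notin A$. Let $a_1 < a_2 < \dots$ be the elements of $A$. We encode $A\cap[1,N]$ by a short ``fingerprint'': the first $s$ elements $S=\{a_1,\dots,a_s\}$ of $A$, with $s \approx \varepsilon^{-1}\log(1/\varepsilon)/p$ chosen later. Given $S$, every $x\in B$ must avoid $S + (A\cap[1,x])$. Hence $A$ avoids the set $F(S,B) = \bigcup_{x\in B}(x - S)\cap \mathbb{N}$, together with the gaps before $a_s$ that $S$ already certifies. We then bound
\[
\Pr(|B|\ge m) \le \sum_{S}\ \Pr(A \text{ starts with } S)\cdot \Pr\big(A\cap F=\emptyset \text{ for some admissible } B\big).
\]
To avoid a union bound over all $B$ (which costs $\binom{N}{m}$, far too much), we use containers. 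We iteratively reveal $A$ and build $B$ greedily, so that $B$ is determined by $S$ together with a small set of ``exceptional'' choices. An interval/sumset lemma says that $|(B - S)\cap \mathbb N \setminus S| \ge |B| + |S| - O(\cdot)$, a Cauchy--Davenport/Freiman $3k-4$ type lower bound for $|B - S|$ in $\mathbb{Z}$. This forces $A$ to avoid roughly $|B|/2 + (\text{structure term})$ positions, and we aim for at least $(1/2-\varepsilon/2)m$ forced empty positions. Concretely, either $B - S$ has size at least $(1/2 - \varepsilon/2)m$ beyond $a_s$, or $B$ is nearly an arithmetic progression / interval. In the latter case $A$ must miss an initial block of length about $m/2$, which gives the bound directly.

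The counting is as follows. The number of choices of $S$ weighted by probability is fine, since $\Pr(A\text{ starts with } S)$ sums to $1$. The number of containers for $B$ must be $\exp(o(\varepsilon m p))$ so as to be absorbed by the $(1-p)^{\varepsilon m/2}$ slack. This is where the hypothesis $p \ge C\log(1/\varepsilon)/\sqrt{\varepsilon^3 m}$ enters. The container for $B$ should be describable by about $\varepsilon^{-1}$ blocks of length $\varepsilon m$ each, recording which are ``mostly in $B$''. Alternatively it can be described by $O(1/(\varepsilon p))$ numbers in $[N]$, costing $\exp(O(\varepsilon^{-1}p^{-1}\log(1/\varepsilon)))$. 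Comparing with $(1-p)^{\varepsilon m}\approx e^{-\varepsilon p m}$ gives exactly the requirement $p^2 \gtrsim \log(1/\varepsilon)/(\varepsilon^2 m)$, and with an extra $\varepsilon$ loss this matches the stated threshold.

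The main obstacle is the structural step: showing that a bounded description (the container) of $B$ already forces $A$ to avoid at least $(1/2-\varepsilon)m$ positions, uniformly over all $A$ compatible with the container. We would first try a discretisation: partition $[1,N]$ into intervals of length $\varepsilon m/10$ and record the density of $B$ in each. We would then apply a Cauchy--Davenport inequality for sums of unions of intervals to the sets ``dense part of $B$'' and $-$``dense part of $A$''. If that fails, we would fall back on an iterative argument that reveals $A$ element by element. In that argument each new element $a$ of $A$ shifts $B$ to give a fresh batch of forbidden positions, and we would need to show that these batches overlap little unless $A$ is sparse in $[1,m/2]$.
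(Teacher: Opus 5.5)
Your proposal has a genuine gap at exactly the step you flag as ``the main obstacle'', and that step is essentially the whole content of the theorem.

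The deterministic part of your plan is easy and is not where the difficulty lies. Suppose $S=\{a_1<\dots<a_s\}$ are the first $s$ elements of $A$ and $B$ is known. Then $B\cap[1,2a_s]\subseteq[1,2a_s]\setminus(S+S)$ has at most $2(a_s-s)+1$ elements. Every $b\in B$ with $b>2a_s$ forces $b-S\subseteq(a_s,\infty)$ to be disjoint from $A$. So gaps plus forced positions already number at least $(|B|-1)/2$, even for $s=1$.

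The problem is that this forced set depends on $B$ exactly, and $B$ is determined by all of $A$. You are therefore back to the $\binom{N}{m}$ union bound. Your containers do not fix this:
\begin{itemize}
\item Knowing which blocks are ``mostly in $B$'' does not determine $B-S$. It gives only density constraints on $A$, not forced-empty positions. You give no argument that these constraints are strong enough (the blocks would need to lie almost entirely in $B$), or that $B$ must contain such blocks at all (it can be spread out or AP-like).
\item Encoding the container by $O(1/(\varepsilon p))$ numbers in $[N]$ costs about $\log(\varepsilon m)$ per number, not $\log(1/\varepsilon)$. That already loses a $\sqrt{\log m}$ factor against the sharp threshold.
\end{itemize}
Several further steps are also wrong or unsupported:
\begin{itemize}
\item The sumset bound you quote fails for the positive part. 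For $A=[m/2+1,\infty)$ one gets $B=[1,m+1]$ and $(B-S)\cap\mathbb{N}=[1,m/2]$.
\item The dichotomy ``$B-S$ is large beyond $a_s$, or $B$ is nearly an AP and $A$ misses an initial block'' is asserted without proof.
\item Bounding $\sum_S\Pr(A\text{ starts with }S)$ by $1$ discards the gaps below $a_s$. In the extremal configuration $A\cap[1,m/2]=\emptyset$ these gaps carry the entire factor $(1-p)^{m/2}$, so the cost $\binom{a_s}{s}p^s(1-p)^{a_s-s}$ must be tracked.
\item The truncation is misjustified: $x\notin A+A$ does not make $A$ miss a long interval (take $A$ to be the odd numbers and $x$ odd). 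The correct bound is $\Pr(x\notin A+A)\le(1-p^2)^{(x-1)/2}$ via the disjoint pairs $\{a,x-a\}$, i.e.\ $(1-p)^{\Omega(px)}$ rather than $(1-p)^{\Omega(x)}$. This still suffices for $N=\Theta(m/p)$.
\end{itemize}

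The missing idea is a fingerprint $F\subseteq A$ that certifies a forced-empty set $Q(F)$ \emph{without any knowledge of $B$}.
\begin{itemize}
\item In the paper, $F$ is built greedily: each step takes the element of $A$ creating the most new pairs summing into the still-uncovered part of $X+X$. Maximality then certifies that $A$ avoids the set $Q$ of elements that would create many such pairs.
\item The union bound then costs only $\binom{|X|}{|F|}p^{|F|}$, i.e.\ $O(\log(1/\varepsilon))$ per fingerprint element near the threshold.
\item Pollard's theorem supplies the robustness. While fewer than $(1/2-\varepsilon)d$ positions are certified empty and $A+A$ misses $d$ elements of $X+X$, the pair $(X\setminus Q,(X+X)\setminus Z)$ is $\beta$-sum-robust with $\beta\ge\varepsilon/(12T)$. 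So each round adds $\Omega(\beta|X|)$ to the number of covered sums plus certified empty positions, and the geometric decay of $\alpha_i+\zeta_i$ keeps $|F|=O(T\sqrt{d/\varepsilon})$.
\item This needs intervals of length $O(d)$, so the paper also splits $[2n]$ (with $n=2m/p$) into two end intervals of length $M=\Theta(m)$ and a middle range. In the end intervals, the $m-\varepsilon m/2$ missed sums are handled by the Pollard-based container. In the middle, even $\varepsilon m/2$ missed sums have probability at most $(1-p)^{m}$, by a separate sum-regular container argument over a dyadic decomposition.
\end{itemize}
Your outline has no substitute for either ingredient.
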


Note that $1/\sqrt{m}$ is a threshold for the event $|\mathbb{N} \setminus (A+A)| \ge m$. For $p \ll 1/\sqrt{m}$, with high probability we have $|A \cap \{1, \ldots, 2m\}| = o(\sqrt{m})$, thus 
$$
    |\{1,\ldots, 2m\} \setminus (A+A)| \ge 2m - |A \cap \{1, \ldots, 2m\}|^2 = 2m - o(m) > m.
$$
Therefore, the bound on $p$ in Theorem \ref{thm: missing naturals} is optimal, and so is the obtained probability up to the $\varepsilon m$ term in the exponent. It remains an interesting open problem to determine the optimal dependence on $\varepsilon$.

It is likely that Theorem \ref{thm: missing naturals} (or rather its finitary version, Theorem  \ref{thm:interval_probability}) can be used to improve \eqref{eq:error_term}, and thus also improve the range of $\lambda$ for which a typical set $A$ is contained in an arithmetic progression of length $\lambda k/2 + o(\lambda k)$. We hope to return to this problem in the future.

\paragraph{Structure of the paper.} Instead of proving Theorem \ref{thm: missing naturals} directly, we shall establish its finitary version, Theorem \ref{thm:interval_probability}, which estimates the probability of the event $|\{1, \ldots, 2n\} \setminus (A + A)| \ge m$, where $A$ is a $p$-random subset of $\{1, \ldots, n\}$. As we shall see in Section \ref{sec:final}, Theorem \ref{thm: missing naturals} follows as an easy corollary. We estimate $\Pr(|\{1, \ldots, 2n\} \setminus (A + A)| \ge m)$ by considering the probability that $A + A$ misses many elements from \emph{end intervals} $\{1, \ldots, Km\}$ and $\{2n - Km + 1, \ldots, 2n\}$ (here `many' means $m - \varepsilon m$), and the probability that it misses few from the middle $\{Km + 1, \ldots, 2n - Km\}$ (where `few' means $\varepsilon m$). The first case is covered by Lemma \ref{lemma:prob_many_missing} in Section \ref{sec: missing many elements}, whereas the second one is covered by Lemma \ref{lemma:missing_few_elements} in Section \ref{sec: missing few elements}. Both results are proved using a container-type argument.

\section{Preliminaries}
Given $a, b \in \mathbb{R}$, we set 
$$
[a,b] = \{x \in \mathbb{N}: a \le x \le b \} \quad \text{ and } \quad (a, b) = \{x \in \mathbb{N}: a < x < b \}.
$$
The sets $(a, b]$ and $[a, b)$ are defined analogously. For $n \in \mathbb{R}$, we let $[n] := [1, n]$. Whenever we require that a certain value $x$ is an integer, we always take $\lceil x \rceil$. All logarithms are base 2.

The following definitions are used extensively throughout the paper.
\begin{definition}
    Given $X, Y \subseteq \mathbb{N}$, let
    $$
        \mS(X, Y) = \bigg\{ (x, x') \in X \times X : x + x' \in Y \; \text{ and } \; x \le x' \bigg\}.
    $$
    Given $a \in \mathbb{N}$, let 
    \[ \mS_a(X,Y) = \bigg\{ x \in X : a + x \in Y \bigg\}. \]
\end{definition}

Given a set $Y \subseteq \mathbb{N}$, we let $\overline{Y} = \mathbb{N} \setminus Y$.

\begin{thm}[Pollard's Theorem \cite{pollard1974generalisation}]
\label{thm: Pollard} 
    Let $\varepsilon < 1/2$ and let $X, Y \subseteq \mathbb{N}$ such that $|X| \geq ( \frac{1}{2} + \varepsilon) |Y|$. Then 
    $$
        |\mS(X, \overline{Y})| \ge \varepsilon^2 |X|^2/2.
    $$
\end{thm}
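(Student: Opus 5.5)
The plan is to derive the statement from Pollard's classical inequality for sums in $\mathbb{Z}$. For finite $A,B\subseteq\mathbb{Z}$ let $r(s)$ be the number of ordered pairs $(a,b)\in A\times B$ with $a+b=s$, and let $N_i=|\{s: r(s)\ge i\}|$. The form I will invoke, which I will have to confirm against \cite{pollard1974generalisation} since I am relying on recalling its exact shape, is
\[
\sum_{i=1}^t N_i=\sum_s \min(t,r(s))\ \ge\ t\bigl(|A|+|B|-t\bigr)\qquad\text{for } 1\le t\le \min(|A|,|B|).
\]
As sanity checks, $t=1$ gives $|A+B|\ge |A|+|B|-1$, and for $A=B=\{0,\dots,k-1\}$ with $t=k$ it gives $k^2\ge k^2$.

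Apply this with $A=B=X$ and $k=|X|$; we may assume $X$ is finite, otherwise pass to a finite subset of size $\lceil(1/2+\varepsilon)|Y|\rceil$. Split the sum $\sum_s\min(t,r(s))$ according to whether $s\in Y$. Each $s\in Y$ contributes at most $t$, so the part over $Y$ is at most $t|Y|$. Hence
\[
\sum_{s\notin Y} r(s)\ \ge\ \sum_{s\notin Y}\min(t,r(s))\ \ge\ t\bigl(2k-|Y|-t\bigr)=t(D-t),
\]
where $D:=2k-|Y|$. The left-hand side is exactly the number of ordered pairs $(x,x')\in X\times X$ with $x+x'\in\overline{Y}$. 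From $|Y|\le k/(1/2+\varepsilon)=2k/(1+2\varepsilon)$ we get
\[
D\ \ge\ 2k\cdot\frac{2\varepsilon}{1+2\varepsilon}\ >\ 2\varepsilon k,
\]
using $\varepsilon<1/2$. Also $D\le 2k$, because $|Y|\ge 0$.

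Next choose $t=\lfloor D/2\rfloor$. This satisfies $1\le t\le k$ whenever $D\ge 2$; the degenerate case $D<2$ forces $\varepsilon k<1$, where the claimed bound is below $1/2$ and is either trivial or handled directly. With this choice the ordered count is at least about $D^2/4>\varepsilon^2k^2$. Passing to unordered pairs with $x\le x'$: every such pair with $x\ne x'$ corresponds to two ordered pairs and every diagonal pair to one. Hence $|\mS(X,\overline{Y})|\ge \tfrac12\cdot(\text{ordered count})\ge \varepsilon^2k^2/2$. The only slack to track is the rounding of $t$, which is absorbed because we actually have the stronger bound $D\ge 4\varepsilon k/(1+2\varepsilon)$ rather than just $2\varepsilon k$.

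The main obstacle is the exact form of Pollard's inequality over $\mathbb{Z}$, as opposed to $\mathbb{Z}_p$, where the bound reads $t\min(p,|A|+|B|-t)$. If only a weaker form such as $t(|A|+|B|-2t+1)$ were available, the optimisation would give roughly $D^2/8$ ordered pairs. That only yields the constant $\varepsilon^2/(1+2\varepsilon)^2$ in place of $\varepsilon^2/2$ for the unordered count, which falls short when $\varepsilon$ is close to $1/2$. In that case I would fall back on deducing the $\mathbb{Z}$ version from the $\mathbb{Z}_p$ version by embedding $X$ into $\mathbb{Z}_p$ for a prime $p>2\max X$, so that no wrap-around occurs. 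Then $A+B$ does not fill $\mathbb{Z}_p$ and the $\min$ with $p$ is inactive.
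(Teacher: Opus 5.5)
The paper gives no proof of this statement; it is quoted from Pollard. Your route is the natural one. The integer inequality $\sum_s\min(t,r(s))\ge t(2|X|-t)$ does follow from Pollard's $\mathbb{Z}_p$ theorem by embedding into $\mathbb{Z}_p$ with $p>2\max X$. Discarding at most $t|Y|$ for the sums in $Y$ then correctly gives at least $t(D-t)$ ordered pairs. The gaps are in the final step.

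First, the rounding is not absorbed. The integer maximum of $t(D-t)$ is $\lfloor D^2/4\rfloor$, and your extra slack $D\ge 4\varepsilon k/(1+2\varepsilon)$ vanishes as $\varepsilon\to1/2$. If $|Y|=|X|=k$ with $k$ odd, then $D=k$ and the hypothesis holds for every $\varepsilon<1/2$. Halving $(k^2-1)/4$ then falls below $\varepsilon^2k^2/2$ once $\varepsilon^2>(k^2-1)/(4k^2)$. For $X=\{1,2,3\}$, $Y=\{3,4,5\}$, $\varepsilon=0.49$, your chain gives $1$ against a target of about $1.08$; the true count is $2$, and both pairs are diagonal.

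The repair is to keep the diagonal: $|\mS(X,\overline Y)|=(O+\Delta)/2$, where $O$ is the ordered count and $\Delta=|\{x\in X:2x\notin Y\}|$. One checks that $\lfloor D^2/4\rfloor\ge\varepsilon^2k^2$ for every $D\ge2$ except $D=k$ odd:
\begin{itemize}
\item for even $D$, use $D>2\varepsilon k$;
\item for odd $3\le D<k$, use $\varepsilon\le D/(2(2k-D))$ and $(D^2-1)(2k-D)^2\ge D^2k^2$, valid since $k\ge D+1$;
\item for $D>k$, use $D^2-1\ge k^2+2k$.
\end{itemize}
In the exceptional case $D=k$ odd, $\Delta\ge1$ gives $O+\Delta\ge(k^2+3)/4>k^2/4$. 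If instead $\Delta=0$, then $Y=\{2x:x\in X\}$, so every pair $x<x'$ with $x+x'\in Y$ has its midpoint in $X$. There are at most $(k-1)^2/4$ such pairs, leaving at least $(k^2-1)/4\ge k^2/8$ pairs in $\mS(X,\overline Y)$.

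Second, the case $D=1$ (that is, $|Y|=2|X|-1$) is neither trivial nor something you can handle directly. The target $\varepsilon^2|X|^2/2$ is positive, so at least one pair is needed. If $X$ is an arithmetic progression and $Y=X+X$, there is none. For example, take $X=\{1\}$, $Y=\{2\}$ with any $0<\varepsilon<1/2$, or $X=\{1,2\}$, $Y=\{2,3,4\}$, $\varepsilon=1/6$. So the statement as printed is false in this corner, and it also tacitly needs $\varepsilon>0$. A correct version needs a mild extra hypothesis such as $2\varepsilon|X|\ge1$, which forces $D\ge2$ and is harmless where the paper applies the theorem. You should have detected this rather than declaring the case ``either trivial or handled directly''.
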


Given $B \subseteq X \times Y$, for some sets $X$ and $Y$, for an element $x \in X$ we define
$$
    B(x, *) = \{ y \in Y : (x, y) \in B \}.
$$
For $y \in Y$, the set $B(*, y)$ is defined analogously. Given $X' \subseteq X$, we set $B(X', *) = \bigcup_{x \in X'} B(x, *)$.

We define the \emph{$p$-random subset} of a countable set $X$ to be a set obtained by taking each element in $X$ with probability~$p$, independently of all other elements.

\section{Probability of missing many elements}
\label{sec: missing many elements}

The following lemma is the main result of this section. In the proof of Theorem \ref{thm:interval_probability}, we shall use it for end intervals, namely with $a = 1$ and $a = 2n - Td + 1$, where $T$ is a suitably large constant.

\begin{lem} \label{lemma:prob_many_missing}
    There exists $C \ge 1$ such that the following holds. Let $d \in \mathbb{N}$ and let $X = [a, a + Td]$, for some $T \ge 3/2$ and $a \in \mathbb{N}$. Let $\varepsilon > 0$, and let $A \subseteq X$ be a $p$-random subset where
    $$
        \frac{C T \log(T / \varepsilon)}{\sqrt{\varepsilon^3  d}} \le p \le 1/2.
    $$
    Then
    $$
        \Pr\bigg( \big| (X + X) \setminus (A + A) \big| \ge d \bigg) \le (1 - p)^{d/2 - 2 \varepsilon d}.
    $$
\end{lem}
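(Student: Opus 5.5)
We will prove Lemma~\ref{lemma:prob_many_missing} with a container-type argument built around Pollard's Theorem (Theorem~\ref{thm: Pollard}). Write $n_0 = |X| = Td+1$. The event $|(X+X)\setminus(A+A)| \ge d$ means there is a set $Y \subseteq X+X$ with $|Y| = d$ and $Y \cap (A+A) = \emptyset$; equivalently, $A$ spans no pair in $\mS(X, Y)$. The expected extremal configuration is that $A$ misses an initial (or final) segment of $X$ of length about $d/2$. The bound $(1-p)^{d/2-2\varepsilon d}$ should therefore come from showing that, for every relevant $Y$, either $A$ must avoid roughly $d/2-\varepsilon d$ specific elements, or some cheap certificate already has tiny probability.

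\textbf{Key dichotomy.} Fix $Y$ with $|Y|=d$ and consider any $Z \subseteq X$ with $|Z| \ge (1/2+\varepsilon)d$. By Theorem~\ref{thm: Pollard}, applied with $\overline{Y}$ in the role of the target, $Z$ spans at least $\varepsilon^2|Z|^2/2$ pairs whose sums lie in $Y$, i.e. pairs in $\mS(Z,Y)$. (Care is needed: the theorem is stated with $X$ and $\overline{Y}$, so we apply it with the complement roles swapped, counting sums landing in $Y$.) Hence, if $A \cap Z$ is large, then $A$ contains many pairs summing into $Y$, which is forbidden.

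\textbf{Algorithmic encoding.} We would run a greedy procedure in the style of graph containers. Scan the elements of $A$ in a fixed order. Maintain an ``available'' set $Z \subseteq X$ of elements not yet excluded. Whenever an element $x \in A$ has large degree $|\mS_x(Z,Y)| \ge \varepsilon^2 |Z|/4$, record $x$ as a fingerprint element and delete $\mS_x(Z,Y)$ from $Z$; these deleted elements cannot lie in $A$. Because every step removes $\Omega(\varepsilon^2 d)$ elements from a set of size at most $Td$, the fingerprint $F$ has size $s = O(T/\varepsilon^2)$. At the end, Pollard guarantees that the low-degree part of $Z$ has size at most about $(1/2+\varepsilon)d$. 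The complement of $Z$ inside some window of length about $d$ must then be missed by $A$. The point to make precise is which elements are forced absent: every $y\in Y$ near the bottom of $X+X$ forces small elements to be absent. More concretely, we would take $W$, the set of the smallest $d$ elements-worth of $X$ relevant to $Y$, apply the argument inside $W$, and conclude that $A$ misses at least $d/2-\varepsilon d$ elements of a set determined by $(F, Y)$.

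\textbf{Union bound.} The probability for a fixed certificate is $p^{s}(1-p)^{d/2-\varepsilon d}$. We must then sum over fingerprints, $\binom{Td}{\le s}$ choices, and over $Y$. Summing over all $Y$ is the main obstacle, since $\binom{2Td}{d}$ is far too large. We would avoid enumerating $Y$ by making the container depend only on $F$. Choose $Y$-free information: the forced-absent set is determined by $F$ together with the sums $F+F$ falling outside $A+A$, so $Y$ can be replaced by $\overline{(F+F)}$-type data revealed during the algorithm. This is the delicate step. The aim is to show that the total number of containers is $\exp(O(s\log(Td)))$. This is absorbed by $(1-p)^{\varepsilon d} \le e^{-p\varepsilon d}$ provided
\[ p\varepsilon d \gg \frac{T}{\varepsilon^{2}}\log(T/\varepsilon) \cdot (\text{log factor}), \]
and the factor $p^s$ contributes a saving of $(1/p)^{-s}$, which tightens this to the stated threshold $p \ge CT\log(T/\varepsilon)/\sqrt{\varepsilon^3 d}$. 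If the degree thresholds are chosen adaptively, so that each step removes about $\varepsilon\sqrt{d/p}$ elements, then $s$ is about $T\sqrt{pd}/\varepsilon^{3/2}$ and the count matches exactly that condition. Optimizing this threshold to get the $\sqrt{\varepsilon^3 d}$ dependence is where the real work lies.
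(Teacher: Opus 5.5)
There is a genuine gap, located in the two steps that carry the argument.

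\textbf{The dichotomy misapplies Pollard's Theorem.} Pollard says that a set of size at least $(\tfrac12+\varepsilon)|B|$ spans many pairs whose sums \emph{avoid} $B$. To force sums \emph{into} $Y$ you would have to take $B=(X+X)\setminus Y$, which has size about $(2T-1)d$, not $Y$ itself. Your claim is in fact false. Take $a=1$, let $Y=[2,d+1]$ be the $d$ smallest sums, and let $Z\subseteq[d/2+1,Td+1]$ be any set of size $(\tfrac12+\varepsilon)d$. Then no pair of $Z$ sums into $Y$.

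\textbf{The containers depend on the unknown set $Y$.} Every object in your greedy procedure depends on $Y$, and your proposed way of removing this dependence is vacuous. Since $F\subseteq A$, we have $F+F\subseteq A+A$, so there are no ``sums of $F+F$ falling outside $A+A$''.

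A sanity check shows that no repair can keep fingerprints of size $O(T/\varepsilon^2)$. $Y$-free containers of that size would give the bound already at $p=d^{-0.9}$. But there $|A+A|\le|A|^2=o(d)$, so the event holds with high probability. Valid fingerprints must have size of order $\sqrt d$, and you explicitly leave that part (``where the real work lies'') undone.

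\textbf{The missing idea} is to build the container around sums you have \emph{certified}, rather than those you are missing. The paper's argument runs as follows.
\begin{enumerate}[(i)]
\item It tracks $Z=F+F\subseteq A+A$ together with a set $Q$ that $A$ avoids. The hypothesis $|(X+X)\setminus(A+A)|\ge d$ is used only through $|Z|\le|X+X|-d$.
\item While $|Q|<(\tfrac12-\varepsilon)d$, one has $|X\setminus Q|-\tfrac12|Z|=(\alpha+\zeta/2)d>0$. So Pollard, applied correctly with $B=Z$, makes $(X\setminus Q,(X+X)\setminus Z)$ $\beta$-sum-robust with $\beta=(\alpha+\zeta)/(12T)$ (Lemma~\ref{lemma:sum_robust_param}).
\item Lemma~\ref{lem:robust_pair_container} then increases $|F+F|+|Q|$ by $\Omega(\beta|X|)$, at a cost of $O(\sqrt{|X|/\beta})$ new fingerprint elements. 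The new part of $Q$ is determined by $F$ alone, because the greedy choices can be replayed from $F$.
\item Since $\alpha+\zeta$ shrinks geometrically while $\alpha$ stays above $\varepsilon$, the fingerprint sizes sum to $O(T\sqrt{d/\varepsilon})$ (Lemma~\ref{lemma:many_missing_container}).
\item Chernoff handles small $|A|$. The union bound $\binom{Td}{|F|}p^{|F|}(1-p)^{d/2-\varepsilon d}$ then gives the lemma exactly at the threshold $CT\log(T/\varepsilon)/\sqrt{\varepsilon^3 d}$.
\end{enumerate}
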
   

The key technical ingredient in the proof of Lemma \ref{lemma:prob_many_missing} is the following container-type lemma.

\begin{lem} \label{lemma:many_missing_container}
    There exists $L \ge 1$ such that the following holds. Let $d \in \mathbb{N}$ and let $X = [a, a + Td]$, for some $T \ge 1$ and $a \in \mathbb{N}$. Let $0 < \varepsilon < 1/2$ be such that $d \ge L T^2 / \varepsilon^3$. Then for any $A \subseteq X$ such that
    $$
        |(X + X) \setminus (A + A)| \ge d \quad \text{ and } \quad |A| \ge L T \sqrt{d / \varepsilon},
    $$
    there exist $F \subseteq A$ and $Q = Q(F) \subseteq X$ such that
    $$
         |F| = LT \sqrt{d / \varepsilon}, \quad |Q| \ge (1/2 - \varepsilon)d \quad \text{ and } \quad A \cap Q = \emptyset.
    $$    
    Importantly, the set $Q(F)$ depends only on $F$ and not on the whole set $A$.
\end{lem}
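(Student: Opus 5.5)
The plan is to build $F$ in two rounds and to define $Q(F)$ as the set of elements that are ``incompatible'' with $F$. Throughout, write $Y = (X+X)\setminus(A+A)$, so that $|Y| \ge d$ and $\mS(A,Y)=\emptyset$.

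\emph{Round one: approximate $Y$ from $F$ alone.} Pick $F_1 \subseteq A$ of size $\tfrac12 LT\sqrt{d/\varepsilon}$ and set $Z = Z(F_1) = (X+X)\setminus(F_1+F_1)$. We always have $Y \subseteq Z$, and $Z$ depends only on $F_1$. The aim is to choose $F_1$ so that $Z\setminus Y$ is small, say $|Z\setminus Y| \le \varepsilon d$. Call $z \in A+A$ \emph{rich} if it has many representations $z=a+a'$ with $a,a'\in A$. For a uniformly random $F_1 \subseteq A$, a rich $z$ survives in $Z$ with probability $\exp(-\Omega(r(z)|F_1|^2/|A|^2))$. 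Sums with few representations are handled by the counting bound $\sum_z r(z) = \Theta(|A|^2)$ together with $|X+X| \le 2Td+1$. Here the hypothesis $d \ge LT^2/\varepsilon^3$ ensures $|F_1|^2 \gg T d/\varepsilon$, which is what makes the expected number of survivors at most $\varepsilon d$. If the random choice does not suffice for the poorly represented sums, I would instead choose the elements of $F_1$ greedily, each time taking an element of $A$ that kills the most remaining elements of $Z\cap(A+A)$.

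\emph{Round two: define $Q$.} Pick a further set $F_2 \subseteq A$ of the same size, put $F=F_1\cup F_2$, and let
$$
W = \{x \in X : |\mS_x(F_2, Z)| \le \eta |F_2|\}, \qquad Q = X\setminus W,
$$
with $\eta$ a small multiple of $\varepsilon$. This is a function of $F$ only. The identity $Q\cap A=\emptyset$ should hold as follows. For $x\in A$ we have $x+F_2 \subseteq A+A$, so $x+f\in Z$ forces $x+f\in Z\setminus Y$. Since $|Z\setminus Y|\le\varepsilon d$, a suitable choice of $F_2$ (random, with a union bound over $x\in A$, or deterministic by pigeonhole) guarantees that no $x\in A$ has more than $\eta|F_2|$ such hits.

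\emph{Lower bound on $|Q|$ via Pollard.} Suppose $|W| > |X| - (1/2-\varepsilon)d$. Let $Y' = (X+X)\setminus Z$, so $|Y'| \le 2|X|-d$ and hence $|W| \ge (1/2+\varepsilon/2)|Y'|$ for $T$ in range. Theorem~\ref{thm: Pollard} then gives $|\mS(W,Z)| \ge \varepsilon^2|W|^2/8$. Thus many pairs inside $W$ sum into $Z$, yet every $x\in W$ has few partners in $F_2$ summing into $Z$. To reach a contradiction, I would choose $F_2$ to be ``representative'' for the pair counts, in the sense that for each $x$ the proportion $|\mS_x(F_2,Z)|/|F_2|$ approximates a density computed over a large reference set that itself depends only on previously chosen elements.

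\emph{Main obstacle.} The hard step is this representativeness, because $F_2$ must lie in $A$ while Pollard produces pairs in $W\times W$, not in $W\times A$. My first attempt would be a container-style iteration. Starting from $W_0=X$, at each step either the current $W_i$ satisfies $|W_i|\le|X|-(1/2-\varepsilon)d$ and we stop, or Pollard yields many $Z$-sums inside $W_i$. In the latter case, either $A$ contains an element with $\gtrsim\varepsilon^2|W_i|$ partners in $W_i$ summing into $Z$, which we add to $F$ and use to delete those partners from $W_i$, or $A$ is nearly disjoint from the heavy part of $W_i$, and we shrink $W_i$ to its light part. Each step removes $\Omega(\varepsilon^2 |X|)$ elements or is charged to one new element of $F$. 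The bound $|F| = LT\sqrt{d/\varepsilon}$ is then verified by balancing the number of steps against the $\varepsilon d$ slack, again using $d\ge LT^2/\varepsilon^3$.
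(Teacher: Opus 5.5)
There is a genuine gap in Round One, and everything after it depends on that step. You need $F_1\subseteq A$ with $|F_1|=O(T\sqrt{d/\varepsilon})$ such that $F_1+F_1$ covers all but $\varepsilon d$ elements of $(A+A)\cap(X+X)$. This is false in general.

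\emph{Counterexample.} Take $T=4$, $X=[1,4d+1]$ and $A=[1,2d]\cup\{4d+1\}$. Then $|(X+X)\setminus(A+A)|=2d+1$, and $|A|=2d+1\ge LT\sqrt{d/\varepsilon}$ for large $d$, so the hypotheses hold. However, each of the $2d$ elements of $[4d+2,6d+1]\subseteq A+A$ has the unique representation $(4d+1)+b$ with $b\in[1,2d]$. So $F_1+F_1$ contains at most $|F_1|$ of them, and $|Z\setminus Y|\ge 2d-|F_1|\gg\varepsilon d$. No random or greedy choice can fix this: the identity $\sum_z r(z)=\Theta(|A|^2)$ does not limit how many sums have few representations.

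\emph{Consequence for $Q$.} Your construction of $Q$ then fails concretely. Since $(4d+1)+f\in F_1+F_1$ forces $f\in F_1$, you get $|\mS_{4d+1}(F_2,Z)|\ge|F_2\setminus F_1|$. Hence $4d+1\in Q\cap A$ for any genuinely new $F_2$. The ``representativeness'' of $F_2$, which you flag as the main obstacle, is left unproved. Your closing container iteration has the same defect: deleting the partners $w$ of a heavy $x\in A$ with $x+w\in Z$ is legitimate only if $x+w\notin A+A$. That would require $Z\subseteq Y$, whereas you only know $Y\subseteq Z$.

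\emph{A smaller issue.} Your Pollard step $|W|\ge(1/2+\varepsilon/2)|Y'|$ holds only for $T\le 3/2$. For general $T$ the available margin is of order $\varepsilon/T$, which is why the paper's robustness parameter carries a factor $1/T$.

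The missing idea is that you should never try to pin down $A+A$ in advance. The paper maintains two certificates, both determined by $F$ alone:
\begin{itemize}
\item $Z_i\subseteq F_i+F_i\subseteq A+A$, the sums known to be hit;
\item $Q_i$, the elements known to be absent from $A$.
\end{itemize}
Progress is measured by $|Z_i|+|Q_i|$. Since $|Z_i|\le|A+A|\le 2|X|-d$, the set $Z_i$ cannot grow indefinitely; this is exactly where the hypothesis $|(X+X)\setminus(A+A)|\ge d$ enters.

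Writing $|Q_i|=d/2-\alpha_i d$ and $|Z_i|=2|X|-d-\zeta_i d$, Pollard's theorem makes $(X\setminus Q_i,(X+X)\setminus Z_i)$ $\beta_i$-sum-robust with $\beta_i=(\alpha_i+\zeta_i)/(12T)$ (Lemma~\ref{lemma:sum_robust_param}). Lemma~\ref{lem:robust_pair_container} then supplies $O(\sqrt{|X|/\beta_i})$ new elements of $F$. These either put $\Omega(\beta_i|X|)$ new sums into $F+F$, or certify a set $Q_i'$ of size $\Omega(\beta_i|X|)$ disjoint from $A$. There, $A\cap Q_i'=\emptyset$ comes from greedy maximality: an element of $A$ of high degree would already have been selected. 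The degree caps on $B$ are what make $Q_i'$ large. Nothing is ever deleted on the basis of a guessed sum.

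Because $\alpha_i+\zeta_i$ decays geometrically, $\sum_i\beta_i^{-1/2}$ is dominated by its last term, which is at most $\sqrt{12T/\varepsilon}$. This gives $|F|=O(T\sqrt{d/\varepsilon})$. In the example above, $(X+X)\setminus Z_i$ always retains almost all of the $2d$ uniquely represented sums of $A+A$. The argument still goes through because progress is allowed to come from $Q_i$.
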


We postpone the proof of Lemma \ref{lemma:many_missing_container} to the next section. We now use it to derive Lemma \ref{lemma:prob_many_missing}.

\begin{proof}[Proof of Lemma \ref{lemma:prob_many_missing}]
    Let $L$ be the constant given by Lemma \ref{lemma:many_missing_container}. We can assume $d \ge L T^2 / \varepsilon^3$, as otherwise $p > 1$ (assuming $C \geq L$). Let 
    $$
        \mQ = \left\{ (F, Q(F)) : F \in \binom{X}{L T \sqrt{d / \varepsilon}} \right\}, 
    $$
    where $Q(F)$ is the set corresponding to $F$ in Lemma \ref{lemma:many_missing_container}. Note that $Q(F)$ is only defined for $F$ which corresponds to some $A \subseteq X$ such that $|(X+X) \setminus (A+A)| \ge d$ and $|A| \ge LT \sqrt{d / \varepsilon}$. For every other $F$ of size $LT \sqrt{d / \varepsilon}$, we simply set $Q(F) = X$. Recall that for every $(F, Q) \in \mQ$ we have $|F| = LT \sqrt{d / \varepsilon}$ and $|Q| \ge d/2 - \varepsilon d$.
    
    By Lemma \ref{lemma:many_missing_container}, if $|(X + X) \setminus (A + A)| \ge d$ then either $|A| < LT \sqrt{d / \varepsilon}$ or there exists $(F, Q) \in \mQ$ such that $F \subseteq A$ and $A \cap Q = \emptyset$. Therefore, we have the following bound:
    \begin{equation} \label{eq:A_misses_many}
        \Pr\bigg( \big| (X + X) \setminus (A + A) \big| \ge d \bigg) \le \Pr\bigg( |A| \le L T \sqrt{d / \varepsilon} \bigg) + \Pr\bigg( \bigcup_{(F, Q) \in \mQ} F \subseteq A \text{ and } A \cap Q = \emptyset \bigg).
    \end{equation}
    Choosing $C \ge 100L$, we have $\mathbb{E}[|A|] = |X|p \ge 100 L T \sqrt{ d / \varepsilon}$, with room to spare. Chernoff's bound gives
    \[ \Pr\bigg( |A| \le L T \sqrt{d / \varepsilon} \bigg) \leq \Pr\bigg( |A| \le \frac{\mathbb{E}[|A|]}{100} \bigg) \leq \left(100^{1/100} \cdot e^{-99/100}  \right)^{|X| p} \leq \left( e^{-94/100} \right)^{|X|p}. \]
    Now using that $|X| \geq 3d/2$ and $p \leq 1/2$, we obtain
    \[ \Pr\bigg( |A| \le L T \sqrt{d / \varepsilon} \bigg) \leq (1 - p)^d. \]
    The second term in \eqref{eq:A_misses_many} is estimated with a union bound:    
    \begin{align*}
        \Pr\bigg( \bigcup_{(F, Q) \in \mathcal{Q}} F \subseteq A \text{ and } Q \cap A = \emptyset \bigg) &\le \sum_{(F, Q) \in \mathcal{Q}} p^{|F|} (1 - p)^{|Q|} \le \sum_{(F, Q) \in \mathcal{Q}} p^{L T \sqrt{d / \varepsilon}} (1 - p)^{d/2 - \varepsilon d} \\
        &\le \left( \frac{e Td p}{L T \sqrt{d / \varepsilon}} \right)^{L T \sqrt{d / \varepsilon}} (1 - p)^{d/2 - \varepsilon d},
    \end{align*}
    where the penultimate inequality follows from a well-known estimate $\binom{x}{y} \le (ex / y)^y$. To finish the proof, it suffices to note that
    $$
        \left( \frac{e Td p}{L T \sqrt{d / \varepsilon}} \right)^{L T \sqrt{d / \varepsilon}} (1 - p)^{\varepsilon d/2} \le 1.
    $$
    This follows from the fact that the left-hand side is monotone decreasing in $p$ in the relevant range once $C$ has been chosen large enough, and so it achieves the maximum for $p = C T \log(T / \varepsilon) / \sqrt{\varepsilon^3 d}$. We omit routine calculation.
\end{proof}

\subsection{Sum-robust sets}

The proof of Lemma \ref{lemma:many_missing_container} relies on the fact that intervals enjoy a certain \emph{sum-robust} property. We identify this as the key property of intervals, and exploit it in Lemma \ref{lem:robust_pair_container} to deduce information about sufficiently large subsets of sets with such a property. By iterating this lemma, we eventually obtain Lemma \ref{lemma:many_missing_container}.

\begin{definition}
    Let $X, Y \subseteq \mathbb{N}$. We say that the pair $(X,Y)$ is \emph{$\beta$-sum-robust} if for every $R_X \subseteq X$ and $R_Y \subseteq Y$ of size $|R_X|, |R_Y| \le \beta |X|$, we have
    $$
        |\mS(X \setminus R_X, Y \setminus R_Y)| \ge \beta^2 |X|^2.
    $$
\end{definition}

\begin{lem} \label{lem:robust_pair_container}
    Suppose $(X, Y)$ is $\beta$-sum-robust, for some finite $X, Y \subseteq \mathbb{N}$ and $\beta > 0$ such that $|X| > 16 \beta^{-3}$. Then for every $A \subseteq X$ of size $|A| \ge 2 \sqrt{|X| / \beta}$, there exist $F \subseteq A$  and $Q = Q(F) \subseteq X$ such that
    $$
        |F| \le 2 \sqrt{|X| / \beta},  \quad \left| (F + F) \cap Y \right| + |Q| \ge \beta |X| / 64 \quad \text{ and } \quad A \cap Q = \emptyset.
    $$
    Importantly, the set $Q(F)$ depends only on $F$ and not on the whole set $A$, and the same set $F$ can be obtained from any set $A'$ such that $F \subseteq A' \subseteq A$.    
\end{lem}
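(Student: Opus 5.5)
The plan is a two-phase greedy container algorithm. We process candidates in a fixed canonical order on $X$. The set $Q$ will consist only of elements that the algorithm \emph{queried} and found to lie outside $A$. Every positive answer adds the element to $F$. This immediately gives the two "importantly" clauses. The run is determined by the sequence of answers, and every positive answer is recorded in $F$. So $Q$ is a function of $F$ alone, and any $A'$ with $F\subseteq A'\subseteq A$ gives the same answers: the elements of $F$ are in $A'$, and the negatively queried elements are not in $A\supseteq A'$. Throughout, write $s=\sqrt{|X|/\beta}$ and let $Y'=Y\setminus(F+F)$ denote the currently uncovered part of $Y$. If at any moment $|(F+F)\cap Y|\ge\beta|X|/64$, we stop and output the current $F$ and $Q$.

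\emph{Phase 1.} Repeat $s$ times. As long as we have not stopped, $|(F+F)\cap Y|<\beta|X|$. We will also keep $|Q|<\beta|X|$, since otherwise we stop with $|Q|\ge\beta|X|/64$. Then $\beta$-sum-robustness with $R_X=Q$ and $R_Y=(F+F)\cap Y$ shows that the graph $G$ on $X\setminus Q$ has at least $\beta^2|X|^2$ edges, where $xx'$ is an edge when $x+x'\in Y'$. Go through the vertices of $X\setminus(Q\cup F)$ in decreasing order of $G$-degree, ties broken canonically. Each queried vertex not in $A$ is put into $Q$, and the first vertex found in $A$ becomes the next $v_i$ and is added to $F$. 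If the vertices of degree at least $\beta^2|X|$ all lie outside $A$, they all go into $Q$. There are at least $\beta^2|X|/2$ of them, because the edge count forces the high-degree vertices to carry many edges. Then $|Q|$ is already large, up to constant bookkeeping. Otherwise each chosen $v_i$ has degree at least $\beta^2|X|$ in the current graph. After Phase 1 we have $F_1=\{v_1,\dots,v_s\}$ with $\sum_i\deg(v_i)\ge s\beta^2|X|$. Here $\deg$ means degree at the time of choice, counting pairs whose sums are uncovered at that time.

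\emph{Phase 2.} Call $x\in X\setminus Q$ \emph{heavy} if $x+v_i\in Y'$ for at least $k\approx\beta^2 s/4$ indices $i$. Degrees can only drop as $Y'$ shrinks, but as long as we have not stopped, fewer than $\beta|X|/64$ sums were removed, so most of the edge mass survives. Double counting the pairs $(x,i)$ then gives a heavy set $H$ with $|H|\gtrsim\beta^2|X|$. Query the elements of $H$ in canonical order, adding each element found in $A$ to $F$, until either $H$ is exhausted or $s$ elements have been added.
\begin{itemize}
\item In the first case, put the elements of $H\setminus F$ into $Q$. These are disjoint from $A$ and $|F|\le 2s$. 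If at most half of $H$ lies in $A$, then $|Q|\ge|H|/2$. If more than half lies in $A$, we must have hit the cap of $s$ first, which is the second case.
\item In the second case, we have $s$ heavy elements $x_1,\dots,x_s\in F$. Each yields at least $k$ sums $x_j+v_i\in Y$. A fixed $y$ arises from at most $s$ pairs $(x_j,v_i)$, since $j$ determines $i$. So at least $sk/s=k$ distinct new sums are covered, which is too weak as stated.
\end{itemize}
To fix the second case, I would instead count via Cauchy--Schwarz, or choose $k$ so that $s\cdot k\ge s\cdot\beta|X|/64$. This needs each $y$ to be hit by few pairs. I would control that by splitting $Y$ into $y$ with many representations, which are already covered once a few $x_j$ are present, and $y$ with few representations.

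The main obstacle is exactly this counting: showing that the $s$ heavy elements together with $F_1$ cover $\Omega(\beta|X|)$ \emph{distinct} sums in $Y$ rather than many repeated ones. Equivalently, one must choose the thresholds $k$ and $s$ so that the numbers close, using $|X|>16\beta^{-3}$, which makes $s\ge 4\beta^{-2}$ and $\beta^2 s$ large. If the direct count fails, the first fallback is to iterate Phase 1 itself: each new $v$ of degree $\ge\beta^2|X|$ contributes edges to a set $N(v)$, and the elements of $N(v)\cap A$ are exactly those whose addition covers the sums $v+x$. Then one runs the heavy/light dichotomy on the neighbourhood multiset $\sum_i\mathbf 1_{N(v_i)}$, which has total mass $\ge s\beta^2|X|$. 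The final constant $1/64$ is meant to absorb the losses from the halving steps above.
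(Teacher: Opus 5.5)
Your ``rejected queries'' device for $Q$ does handle the two ``importantly'' clauses. The quantitative core does not close, however, and the shortfall is a factor of $\beta$, not constant bookkeeping. This matters because $\beta$ is small in the application ($\beta\ge\varepsilon/(12T)$). Your Phase~1 bail-out gives $|Q|\ge\beta^2|X|/2$, and your Phase~2 double count gives $|H|\gtrsim\beta^2|X|$; both are far below $\beta|X|/64$. The reason is that nothing stops a single $x$ from absorbing up to $s$ of the roughly $s\beta^2|X|$ incidences $x+v_i\in Y'$, so this mass may sit on only about $\beta^2|X|$ elements. Dually, a single sum $y$ can arise from up to $s$ pairs $(x,v_i)$. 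So covering fewer than $\beta|X|/64$ sums can destroy up to $s\beta|X|/64\gg s\beta^2|X|$ incidences. Hence ``most of the edge mass survives'' is unjustified, and this is exactly why your distinct-sums count in the second case fails. Your first case also needs $|H|/2>s$, i.e.\ roughly $|X|\gtrsim\beta^{-5}$, which $|X|>16\beta^{-3}$ does not give. The Phase~1 loss alone can be repaired by applying robustness to the high-degree set itself. If at most about $\beta|X|/2$ vertices outside $F$ have degree $\ge\beta^2|X|/8$, delete them together with $F$ and the current $Q$. Robustness still leaves $\beta^2|X|^2$ pairs, all among vertices of degree $<\beta^2|X|/8$, which is impossible. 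Phase~2, as you set it up, has no such repair.

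The missing idea is to cap multiplicities in both coordinates while running Phase~1. The paper records the incidences $B\subseteq X\times Y$ created by each greedy choice $a$, namely all $(x,a+x)$ with $x\in\mathcal S_a(X\setminus R_X,Y\setminus R_Y)$. As soon as some $x$ or $y$ has $\sqrt{\beta|X|}$ incidences, it moves into $R_X$ or $R_Y$ and is ignored in all later degree computations, which forces later choices to spread their neighbourhoods. This gives a dichotomy on $|B|$.

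If $|B|<\beta^{3/2}|X|^{3/2}/8$, then $|R_X|,|R_Y|<\beta|X|/8$, so robustness still applies. Greedy maximality gives every $a\in A\setminus F$ degree $<\beta^2|X|/8$. The \emph{threshold} set $Q=\{x\in X\setminus F: |\mathcal S_x(X\setminus R_X,Y\setminus R_Y)|\ge\beta^2|X|/8\}$ then has size $\ge\beta|X|/2$ by the robustness argument above.

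If $|B|$ is large, every row and column of $B$ has at most $2\sqrt{\beta|X|}=2\beta s$ entries rather than $s$, which is precisely the missing factor $\beta$. One then adds $s$ further elements, chosen greedily by new coverage $|B(a,*)\setminus B(F,*)|$ rather than in canonical order. Either $B(F,*)\subseteq(F+F)\cap Y$ has size $\ge\beta|X|/32$, or deleting those columns costs at most $(\beta|X|/32)\cdot 2\sqrt{\beta|X|}$ incidences. In the latter case, the elements keeping $\ge\beta^{3/2}\sqrt{|X|}/32$ incidences number at least $\beta|X|/64$ and avoid $A$ by maximality.

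Note that here $Q$ is a threshold set, not a set of rejected queries. Its dependence on $F$ alone comes from $B$, $R_X$ and $R_Y$ being determined by $F$.
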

\begin{proof}  
    Consider an arbitrary set $A \subseteq X$ such that $|A| \ge 2 \sqrt{|X| / \beta}$. We construct the set $F$ using a two-phase procedure. Set $F := \emptyset$ and $R_X, R_Y := \emptyset$. Throughout the procedure we maintain $B \subseteq X \times Y$, initially also empty. Repeat the following for $\sqrt{|X| / \beta}$ steps:
    \begin{itemize}
        \item Let $a \in A \setminus F$ be the integer maximising $|\mS_a (X \setminus R_X,Y \setminus R_Y)|$, tie-breaking by picking the smallest.
        \item Add $a$ to $F$, and for each $x \in \mS_a(X \setminus R_X, Y \setminus R_Y)$ add $(x, a+x)$ to $B$.
        \item Set $R_X := \left\{ x \in X \colon |B(x, *)| \ge \sqrt{\beta |X|} \right\}$ and $R_Y := \left\{ y \in Y \colon |B(*, y)| \ge \sqrt{\beta |X|} \right\}$. 
    \end{itemize}
    Note that once some $x \in X$ becomes a part of $R_X$, it stays in the set until the end of the procedure and no pairs of the form $(x, y)$ are further added in $B$. The same holds for $y \in Y$. As $|B(x, *)|$ and $|B(*, y)|$ increase by at most one in each iteration, with room to spare we have
    \begin{equation} \label{eq:bound_Bxy}
        |B(x, *)|, |B(*, y)| \le 2 \sqrt{\beta |X|} \quad \text{ for every $x \in X$ and $y \in Y$}.
    \end{equation} 
    Another important thing to observe is that if we were to run the procedure on any $F \subseteq A' \subseteq A$ instead of $A$, we would have produced the same sets $F$, $R_X$, and $R_Y$. In particular, $R_X$, $R_Y$, and $B$ can be obtained from $F$ alone; in particular, they do not require the whole set $A$.
    
    We now distinguish two cases.
    
    \paragraph{Case 1: $|B| < \beta^{3/2} |X|^{3/2}/8$.} Then $|R_X|, |R_Y| < \beta |X|/8$, and for every $a \in A \setminus F$ we have 
    $$ 
        |\mS_a ( X \setminus R_X, Y \setminus R_Y)| <  \frac{\beta^2 |X|}{8}.
    $$
    Otherwise, since in each step we take $a \in A \setminus F$ which maximises $|\mS_a(X \setminus R_X, Y \setminus R_Y)|$ and the sets $R_X$ and $R_Y$ only potentially increase, we would have $|B| \geq |F| \cdot \beta^2 |X|/8 \geq \beta^{3/2} |X|^{3/2}/8$ -- a contradiction. 
    Set    
    $$ 
        Q := \{x \in X \setminus F \colon |\mS_x (X \setminus R_X, Y \setminus R_Y)| \geq \beta^2 |X| / 8\}.
    $$
    By the previous observation, we have $A \cap Q = \emptyset$. 
    
    Next, we claim that $|Q| + |R_X| + |F| > \beta |X|$, which implies $|Q| \ge \beta |X| / 2$. Suppose, towards a contradiction, that this is not the case. As $|R_Y| \le \beta |X| / 8$, by sum-robustness of $(X, Y)$ we conclude
    $$
        |\mS(X \setminus (Q \cup R_X \cup F), Y \setminus R_Y)| \ge \beta^2 |X|^2.
    $$
    However, by the definition of $Q$ we also have 
    $$
        |\mS(X \setminus (Q \cup R_X \cup F), Y \setminus R_Y)| < \beta^2 |X|^2 / 8,
    $$
    which is clearly a contradiction.

    As we have already observed that $R_X$ and $R_Y$ depend solely on $F$, the same holds for the set $Q$. Therefore, sets $F$ and $Q(F)$ satisfy all the properties.

    \paragraph{Case 2: $|B| \ge  \beta^{3/2} |X|^{3/2}/8$.} Set $F_0 := F$ and $F_1 := \emptyset$, and repeat the following for additional $\sqrt{|X| / \beta}$ steps: Take $a \in A \setminus (F_0 \cup F_1)$ to be the element which maximises $|B(a, *) \setminus B(F_0 \cup F_1, *)|$, tie-breaking by taking the smallest integer, and add it to $F_1$. The final set $F := F_0 \cup F_1$ is clearly of size $2 \sqrt{|X| / \beta}$, and it can be obtained by following the whole procedure up to this point with $A'$ in place of $A$, for any $F \subseteq A' \subseteq A$.
    
    Note that $B(F, *) \subseteq (F + F) \cap Y$. We distinguish two subcases.
    \begin{enumerate}[(a)]
    \item $|B(F, *)| \ge \beta |X| / 32$: In this case we have $|(F+F) \cap Y| \ge \beta |X|/32$, and we can simply set $Q = \emptyset$. All the properties are trivially satisfied.
    
    \item $|B(F, *)| < \beta |X| / 32$: Set $B' = B \setminus (X \times B(F, *))$. Since in each of the last $\sqrt{|X| / \beta}$ steps we have chosen $a \in A \setminus F$ which maximises $|B(a, *) \setminus B(F, *)|$ and the set $F$ increases in each iteration, for every $a \in A \setminus F$ we necessarily have 
    $$
        |B'(a, *)| < \frac{1}{32} \beta^{3/2} \sqrt{|X|}.
    $$
    Set
    $$
        Q := \left\{x \in X \colon |B'(x, *)| \ge \frac{1}{32} \beta^{3/2} \sqrt{|X|} \right\}.
    $$
    As $|B'(f, *)| = 0$ for $f \in F$, we conclude $A \cap Q = \emptyset$. By \eqref{eq:bound_Bxy}, we conclude
    $$
        |B'| \ge |B| - |B(F, *)| \cdot 2\sqrt{\beta |X|} > \frac{1}{16} \beta^{3/2} |X|^{3/2}.
    $$
    Applying \eqref{eq:bound_Bxy} once again, we conclude
    $$
        |B'| \le (|X| - |Q|) \cdot \frac{\beta^{3/2} \sqrt{|X|}}{32} + |Q| \cdot 2 \sqrt{ \beta|X|}.
    $$
    Therefore, $|Q| \ge \beta |X| / 64$.

    We have already established that $B$ can be obtained from $F_0$, and $F_0$ can be obtained from $F$. Therefore, the set $Q$ depends only on $F$ as well. 
    \end{enumerate}
\end{proof}

The following lemma quantifies sum-robustness of certain pairs we encounter in the proof of Lemma \ref{lemma:many_missing_container}.

\begin{lem} \label{lemma:sum_robust_param}
    Let $d \in \mathbb{N}$, and let $X \subseteq \mathbb{N}$ be an interval of size $|X| = Td$, for some $T \ge 1$. Suppose that $Q \subseteq X$ is a set of size $|Q| = d/2 - \alpha d$ and that $Z \subseteq X + X$ is a set of size $|Z| = 2|X| - d - \zeta d$, for some $\alpha > 0$ and $\zeta \ge 0$. Then the pair $(X \setminus Q, (X+X) \setminus Z)$ is $\beta$-sum-robust for 
    $$
        \beta = \frac{\alpha + \zeta}{12T}.
    $$
\end{lem}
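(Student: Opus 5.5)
The plan is to reduce sum-robustness to Pollard's Theorem (Theorem~\ref{thm: Pollard}). Write $X' = X \setminus Q$ and $Y' = (X+X)\setminus Z$. Fix arbitrary $R_X \subseteq X'$ and $R_Y \subseteq Y'$ with $|R_X|, |R_Y| \le \beta |X'|$; note $\beta|X'| \le \beta |X| = (\alpha+\zeta)d/12$. We must show $|\mS(X'\setminus R_X, Y'\setminus R_Y)| \ge \beta^2|X'|^2$.

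\textbf{Step 1: apply Pollard to the complement.} Set $X_0 = X' \setminus R_X$ and $W = Z \cup R_Y \cup \overline{X+X}$. Then pairs in $\mS(X_0, \overline{W})$ are exactly pairs with sum in $(X+X)\setminus (Z\cup R_Y) = Y'\setminus R_Y$. Pollard needs a finite set, so apply it with $Y := Z \cup R_Y$ and observe that $X_0+X_0 \subseteq X+X$, whence $\mS(X_0, \overline{Z\cup R_Y}) = \mS(X_0, Y'\setminus R_Y)$. Now compare sizes:
\[
|X_0| \ge |X| - d/2 + \alpha d - \beta |X|, \qquad |Z \cup R_Y| \le 2|X| - d - \zeta d + \beta |X|.
\]
Then $|X_0| - \tfrac12|Z\cup R_Y| \ge \alpha d + \zeta d/2 - \tfrac32 \beta|X| \ge \tfrac{\alpha+\zeta}{2} d - \tfrac{\alpha+\zeta}{8}d \ge \tfrac{\alpha+\zeta}{4}d$ (using $\beta|X| = (\alpha+\zeta)d/12$). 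So $|X_0| \ge (\tfrac12 + \varepsilon')|Z\cup R_Y|$ with $\varepsilon' = \tfrac{(\alpha+\zeta)d}{4|Z\cup R_Y|} \ge \tfrac{(\alpha+\zeta)d}{8Td} = \tfrac{\alpha+\zeta}{8T}$, since $|Z\cup R_Y| \le 2|X| = 2Td$ (one checks $\beta|X|\le d+\zeta d$ easily, or just bound by $|X+X| < 2|X|$).

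\textbf{Step 2: conclude.} Pollard gives $|\mS(X_0,\cdot)| \ge \varepsilon'^2|X_0|^2/2$. We need $|X_0|$ comparable to $|X'|$: $|X_0| \ge |X'| - \beta|X'| \ge |X'|/2$ provided $\beta \le 1/2$, and $|X'| \ge |X| - d/2 \ge |X|/2$. Hence the count is at least $\tfrac{(\alpha+\zeta)^2}{64T^2}\cdot\tfrac{|X'|^2}{8} $, and comparing with $\beta^2|X'|^2 = \tfrac{(\alpha+\zeta)^2}{144T^2}|X'|^2$ we need the constants to work out; the main obstacle is precisely this bookkeeping. If the slack is insufficient, I would tighten: use $|X_0|\ge (1-\beta)|X'|$ with $\beta$ small, and a sharper lower bound on $\varepsilon'$ via $|Z \cup R_Y| \le 2|X|-d$ when $\beta|X| \le \zeta d$, or else handle degenerate ranges separately. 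Also Pollard requires $\varepsilon'<1/2$; if $\varepsilon'$ is larger, simply apply it with $\varepsilon' $ replaced by a value just below $1/2$ (monotonicity of the hypothesis), which only helps since $(\alpha+\zeta)/(8T)$ is then at most that. Finally, if $\alpha+\zeta$ is so large that $\beta > 1/2$ or the sizes become negative, note $|Q|\ge 0$ forces $\alpha \le 1/2$ and $|Z|\ge0$ bounds $\zeta$, so $\beta \le (1/2 + 2T)/(12T) < 1/4$, keeping all steps valid.
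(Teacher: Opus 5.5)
Your strategy is exactly the paper's. You apply Pollard's Theorem to $X_0=X\setminus(Q\cup R_X)$ and $Z\cup R_Y$, and use $X_0+X_0\subseteq X+X$ to identify the resulting pairs with $\mS(X_0,Y'\setminus R_Y)$.

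\textbf{The gap.} The final comparison, which is the only quantitative content of the lemma, is left unresolved, and as written it fails. With $\varepsilon'\ge(\alpha+\zeta)/(8T)$ and $|X_0|\ge|X'|/2$ you obtain only $\frac{(\alpha+\zeta)^2}{512T^2}|X'|^2$. You need $\beta^2|X'|^2=\frac{(\alpha+\zeta)^2}{144T^2}|X'|^2$.

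The repairs you sketch do not close this in general:
\begin{itemize}
\item Using $|X_0|\ge(1-\beta)|X'|$ gives $\frac{(1-\beta)^2}{128}\cdot\frac{(\alpha+\zeta)^2}{T^2}|X'|^2$. This is below the target whenever $(1-\beta)^2<\frac{128}{144}=\frac89$. Admissible parameters reach that range: $\beta$ can be close to $1/6$ (take $\alpha$ near $1/2$, $\zeta$ near $2T-1$, and $T$ large).
\item Replacing $|Z\cup R_Y|\le 2|X|$ by $2|X|-d$ improves $\varepsilon'$ only by the factor $2T/(2T-1)$, which is negligible for large $T$.
\end{itemize}

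\textbf{The fix.} The loss occurs in Step 1, where you round $\frac{\alpha+\zeta}{2}d-\frac{\alpha+\zeta}{8}d=\frac38(\alpha+\zeta)d$ down to $\frac14(\alpha+\zeta)d$. Keeping the $\frac38$ and using $|Z\cup R_Y|<2|X|=2Td$ gives
\[
|X_0|-\tfrac12|Z\cup R_Y| \;\ge\; \tfrac38(\alpha+\zeta)d \;>\; \tfrac13(\alpha+\zeta)d \;=\; 4\beta Td \;\ge\; 2\beta\,|Z\cup R_Y|,
\]
that is, $|X_0|\ge(\frac12+2\beta)|Z\cup R_Y|$. This is precisely the inequality the paper checks.

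Since $\beta<1/4$, Pollard applies directly with $\varepsilon=2\beta<1/2$, and no truncation is needed. Using $|X_0|\ge(1-\beta)|X'|\ge\frac34|X'|$, it yields at least
\[
2\beta^2|X_0|^2\;\ge\;2\beta^2(1-\beta)^2|X'|^2\;\ge\;\tfrac98\beta^2|X'|^2
\]
pairs. With this correction your argument coincides with the paper's proof.
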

\begin{proof}

    Consider some $R_X \subseteq X \setminus Q$ and $R_Y \subseteq (X+X) \setminus Z$ with $|R_X|,|R_Y| \le |X \setminus Q| \cdot \beta  \le (\alpha + \zeta) d/12$. Set $A = X \setminus (Q \cup R_X)$ and $B = Z \cup R_Y$.  With Pollard's Theorem in mind, we first verify 
    \begin{equation} \label{eq:A_B}
        |A| \ge (1/2 + 2\beta) |B|.
    \end{equation}
    Using the upper bound $|R_X|, |R_Y| \le (\alpha + \zeta) d/12$, inequality \eqref{eq:A_B} follows from
    $$
        |X| - (d/2 - \alpha d) - \frac{(\alpha + \zeta) d}{12} \ge (1/2 + 2\beta) \left(2|X| - d - \zeta d + \frac{(\alpha + \zeta) d}{12} \right),
    $$
    which one easily verifies to hold for the given choice of $\beta$. 

    By Pollard's Theorem and \eqref{eq:A_B} there are at least $2 \beta^2 |A|^2$ pairs $(a,a') \in A \times A$ such that $a \leq a'$ and $a + a' \not \in B$, that is, $a + a' \in (X + X) \setminus B$. As $\alpha \leq 1/2$ and $\zeta \le 2T$, we have $\beta < 1/4$, thus $|A| \geq 3|X \setminus Q|/4$. Therefore, 
    \[2 \beta ^2 |A|^2 \ge \beta^2 |X \setminus Q|^2.\]
    This verifies that the pair $(X \setminus Q, (X + X) \setminus Z)$ is indeed $\beta $-sum-robust.
\end{proof} 

We are now ready to prove Lemma \ref{lemma:many_missing_container}.

\begin{proof}[Proof of Lemma \ref{lemma:many_missing_container}]
    We prove the lemma with $L \geq 8 \sum_{k = 1}^\infty (1 - 2^{-11})^{k/2}$ a sufficiently large constant. Let $T \ge 1$ be such that $|X| = Td$. Set $F_0, Q_0, Z_0 = \emptyset$ and $i = 0$, and repeat the following until $|Q_{i}| \ge d/2 - \varepsilon d$:
    \begin{itemize}
        \item Let $\alpha_i > \varepsilon$ be such that $|Q_i| = d/2 - \alpha_i d$, and $\zeta_i \ge 0$ such that $|Z_i| = 2|X| - d - \zeta_i d$. By Lemma \ref{lemma:sum_robust_param}, the pair $(X \setminus Q_i, (X+X) \setminus Z_i)$ is $\beta_i $-sum-robust for $\beta_i = (\alpha_i + \zeta_i)/(12T)$.

        \item Set $X_i := X \setminus Q_i$. Let $F_i' \subseteq A$ be the set of size $|F_i'| \le 2 \sqrt{|X| / \beta_i}$ given by Lemma \ref{lem:robust_pair_container} applied with $A$, $X_i$ (as $X$) and $(X+X) \setminus Z_i$ (as $Y$), and let $Q_i' = Q(F_i')$ be the corresponding set. Set $F_{i+1} := F_i \cup F_i'$, $Z_{i+1} := Z_i \cup (F_{i+1} + F_{i+1})$, and $Q_{i+1} = Q_i \cup Q_i'$ (note that $Q_i$ and $Q_i'$ are disjoint) and increase $i$.
    \end{itemize}
    Once the procedure has terminated, which we shall briefly show indeed happens, set $F := F_i$ and $Q := Q_i$.

    \paragraph{The procedure is well-defined.} 
    We first verify that we can apply Lemma \ref{lem:robust_pair_container}. As already observed in the description of the procedure, the pair $(X_j, (X + X) \setminus Z_j)$ is $\beta_j $-sum-robust for $\beta_j = (\alpha_j + \zeta_j) / (12T)$, for every $0 \le j < i$. Next, we verify $|A| \ge 2 \sqrt{|X|  / \beta_j}$, which suffices since $|X_j| \leq |X|$. As $\beta_j \ge \alpha_j / (12T) \ge \varepsilon / (12T)$, which holds since otherwise we would have already finished the procedure, we have $2 \sqrt{|X|  / \beta_j} < 8 \sqrt{|X| T / \varepsilon}$. The desired inequality now follows by the assumption of the lemma, namely $|A| \ge L \sqrt{|X|T / \varepsilon}$. Finally, we need  $|X_j| \ge 16 / \beta_j^3$. Noting that $|X_j| \geq |X|/2 = Td/2$, and reusing $\beta_j \ge \varepsilon / (12T)$, this follows from the assumption $d \geq L T^2 / \varepsilon^3$ and taking $L$ large enough.

    Moreover, we have that $|(F'_i + F'_i) \cap ((X+X) \setminus Z_i)| \le |Z_{i+1}| - |Z_i|$, $|Q'_{i}| = |Q_{i+1}| - |Q_i|$. Thus, by Lemma~\ref{lem:robust_pair_container}, we have
    \begin{equation} \label{eq:Z_Q_increment}
        |Z_{j+1}| + |Q_{j+1}| \ge |Z_j| + |Q_j| + \beta_j |X| / 2^7.
    \end{equation}
    As $\beta_j > \varepsilon / (12T)$ and $|Z_{j+1}| + |Q_{j+1}| \le 2|X|$, the procedure eventually terminates.

    \paragraph{Size of $F_i$.}    
    From \eqref{eq:Z_Q_increment} we further conclude    
    $$
        \alpha_{j+1} + \zeta_{j+1} \le (\alpha_{j} + \zeta_j)(1 - 1 / 2^{11}),
    $$
    which implies
    $$
        \alpha_j + \zeta_j \ge (1 - 2^{-11})^{-1}  (\alpha_{j+1} + \zeta_{j+1}) \ge (1 - 2^{-11})^{j-i+1} (\alpha_{i-1} + \zeta_{i-1}) \ge (1 - 2^{-11})^{j-i+1} \varepsilon.
    $$
    To avoid confusion, let us note that $\beta_j$ is only defined for $j \in [0,i-1]$.
    Therefore,
    $$
        |F_i| \le \sum_{k = 1}^{i-1} 2\sqrt{|X| / \beta_{i-k}} \le 2 \sqrt{Td} \sum_{k = 1}^{i} \sqrt{1 / \beta_{i-k}} \le 8T \sqrt{d / \varepsilon} \sum_{k = 1}^i (1 - 2^{-11})^{k/2} \leq L T \sqrt{ d / \varepsilon}.
    $$
    To finish, we keep adding the smallest element in $A \setminus F_i$ to $F_i$ until $|F_i| = L T \sqrt{ d / \varepsilon}$.
    \paragraph{$Q_i$ depends only on the set $F_i$.} By performing the same procedure with $A$ being $F_i$, due to the last property of Lemma \ref{lem:robust_pair_container} we end up producing the same $F_j$ and $Q_j$ in each iteration.
\end{proof}

\section{Probability of missing few elements}
\label{sec: missing few elements}
The following lemma is the main result of this section. It is important to note that the exponent in the upper bound increases linearly with $M$, and does not depend on $d$ (however, we do require that $d$ is large enough). This should not come as a surprise: the closer the number is to $n$, the more ways of writing it as $x + y$ for some $x, y \in [n]$, and hence the larger the probability that it is contained in $A + A$. 

\begin{lem} \label{lemma:missing_few_elements}
    There exists $K_0 \ge 1$ such that the following holds for any $K \ge K_0$. Let $n, d \in \mathbb{N}$ such that $d/ \log^3 d \geq 2^8 K$, and let $M = K d $. Let $A \subseteq [n]$ be a $p$-random subset for 
    $$
         \max \left\{ \frac{K \log K}{\sqrt{M}}, \frac{K^2 \log^2 d}{d} \right\} \le p \le 1/2.
    $$ 
    Then,
    $$
        \Pr\bigg( \big| [M, 2n - M] \setminus (A + A) \big| \ge d \bigg) \le (1 - p)^{M / 2^{11}}.
    $$
\end{lem}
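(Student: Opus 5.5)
A container argument, as in Section~\ref{sec: missing many elements}, but exploiting that every $y \in [M, 2n-M]$ has many representations. Write $y\in[M,2n-M]$ as $x+x'$ with $x,x'\in[n]$. There are at least $\min(y,2n-y)/2 \ge M/2$ unordered pairs $\{x,x'\}$. They are pairwise disjoint. Hence a fixed $y$ is missed with probability at most $(1-p^2)^{M/2}$. This alone is too weak when $p \approx 1/\sqrt M$, so I would not union bound over the $\binom{2n}{d}$ choices of the missed set directly. Instead I would first reduce to a window. Let $W\subseteq[M,2n-M]$ be the set of missed sums, with $|W|\ge d$. Then some block $[y_0, y_0+M]$ contains at least $d' \ge d\cdot M/(2n)$ missed elements. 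This is bad when $n\gg M$. The better choice is to take the $d$ missed elements $y_1<\dots<y_d$ (say the smallest ones) and work only with the relevant part of $A$.

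Concretely: fix the smallest missed element $y_1$. For each $y_j$ consider the interval $I = [y_1/2 - M/2, y_1/2]$ (or the symmetric analogue near $n$ when $y_1>n$), which has length $M/2$. Every $x \in I$ has partner $y_j - x \in[n]$. The event says that $A\cap I$ and $A\cap (y_j - I)$ have no ``matching'' pair for every $j$. I would build a container via Lemma~\ref{lem:robust_pair_container}-style greedy selection. Reveal $A\cap I$ first; with probability at least $1-(1-p)^{M/2^{10}}$ (Chernoff) we have $|A\cap I|\ge pM/4$. Each $a\in A\cap I$ forbids $y_j-a$ from $A$ for all $j$. This gives a set $Q=\bigcup_j (y_j - (A\cap I))$ that $A$ must avoid. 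Take a fingerprint $F\subseteq A\cap I$ of size $s \approx K\log K\cdot \sqrt{M}/K$ (small). Then $|F+\{y_1,\dots,y_d\}|$ is large by a Cauchy--Davenport/sumset lower bound $|F-W|\ge |F|+|W|-1$. Better, the translates $y_j - F$ are mostly distinct, since the $y_j$ are distinct. Thus $|Q(F)| \ge \min(|F|\,d/2,\ M/2^{9})$ after trimming to a region of $[n]$ of size $O(M)$ where the events are independent of $I$ (choose $I$ and the $y_j$ so that $y_j - I$ is disjoint from $I$; this is possible by discarding $O(1)$ of the $y_j$ near $2\cdot I$).

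Then union bound over the data $(y_1,\dots,y_d$ restricted to a window of length $O(M)$ above $y_1$, and $F)$. The number of choices is roughly $2n\cdot\binom{O(M)}{d}\cdot\binom{M/2}{s}$, while the probability is $p^{s}(1-p)^{M/2^{9}}$. The factor $2n$ from $y_1$ is the problem. I would handle it by summing over $y_1$ the probability that $y_1$ is the smallest missed element. Even better, I would use the decay in $y_1$: for $y_1$ at distance $t\ge M$ from the boundary, the window has length $t$ and the bound becomes $(1-p)^{t/2^9}$, summable over $t$. The term $\binom{O(t)}{d}\le (et/d)^d$ is absorbed by $(1-p)^{t/2^{10}}$ using $p\ge K^2\log^2 d/d$, since $d\log(t/d)\ll pt$ when $t\ge Kd$. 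Similarly $\binom{t}{s}p^s$ is absorbed using $p\ge K\log K/\sqrt M$.

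The main obstacle is making the forbidden set $Q(F)$ genuinely of size $\Omega(M)$ with $|F|$ only $O(\sqrt M\log K)$ regardless of how the $y_j$ cluster, i.e.\ lower bounding $|F + \{-y_j\}|$. If all $d$ missed elements lie in a short interval, the translates overlap heavily. I would first try to settle this with the trivial bound $|F-W|\ge |F|\cdot|W|/\text{(overlaps)}$, replacing $W$ by a $d$-element subset chosen greedily. If that fails, I would run the two-phase procedure of Lemma~\ref{lem:robust_pair_container} with $X=I$ and $Y$ the missed set, using Pollard's Theorem~\ref{thm: Pollard} to supply robustness.
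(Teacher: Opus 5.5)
There is a genuine gap, in the step you treat as routine: the union bound over the missed elements.

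Your forbidden set $Q=\bigcup_j (y_j-F)$ depends on $W=\{y_1,\dots,y_d\}$, so you must pay for the entropy of $W$. Even inside a window of length $M=Kd$ this entropy is at least $\log\binom{M}{d}\ge d\log K$, while the gain is at most $(1-p)^{O(M)}=e^{-O(pM)}$. For instance, $p=K\log K/\sqrt M$ is admissible whenever $d\ge K^4$ and $K$ is large. Then $pM=K^{3/2}\log K\sqrt d\le d\log K/\sqrt K$, so the union bound is vacuous. This is exactly the regime of the application in Theorem~\ref{thm:interval_probability}, where $K=\Theta(1/\varepsilon)$ is fixed and $d=\varepsilon m/2$ is arbitrarily large. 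Your justification ``$d\log(t/d)\ll pt$ using $p\ge K^2\log^2 d/d$'' is incorrect: at $t=Kd$ that hypothesis only gives $pt\ge K^3\log^2 d$, which is not $\gg d\log K$ for large $d$.

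Enumerating only a subset $W'\subseteq W$ does not rescue this. Forcing $|W'-F|=\Omega(M)$ needs $|W'|\,|F|=\Omega(M)$, and the joint entropy of $(W',F)$ then still requires roughly $p^2M\gtrsim\log(1/p)$. That fails for $p=K\log K/\sqrt M$ once $\log M\gg K^2\log^2K$.

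Separately, the bound $|W-F|\ge\min(|F|d/2,M/2^9)$ that you flag as the main obstacle is false in general: if $W$ and $F$ are arithmetic progressions with a common difference, then $|W-F|=|W|+|F|-1\ll M$. Your fallback, running Lemma~\ref{lem:robust_pair_container} with $X=I$ and $Y=W$, does not fit either. All of $I+I$ lies in $[y_1-M,y_1]$, so $\mS(I,W)$ has at most one element, and the container would still depend on $W$.

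The missing idea is that the container must be a function of a small fingerprint $F\subseteq A$ alone, with the missed set entering only through its size. Lemma~\ref{lem:sum_regular_container} does this in three phases:
\begin{enumerate}[(i)]
\item In Phase~I it repeatedly removes $F'+F'$ from the \emph{whole} target set $Y$, as long as some $F'\subseteq A$ with $|F'|\le 2\sqrt{|X|}$ covers a $\kappa/16$-fraction of the current $Y_0$. Since $Y_0\supseteq Y\setminus(A+A)$ keeps at least $d$ elements, this stops after $O(\log(|Y|/d))$ rounds. That is where the $\log K$ in $p\ge K\log K/\sqrt M$ comes from.
\item Phases~II and~III then run a greedy maximum-degree selection on the pairs $(x,a+x)$ with $a+x\in Y_0$. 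The resulting $Q$, of size $\Omega(\kappa|X|)$, is avoided by $A$ because of greedy maximality, not because elements $y-a$ are forbidden.
\end{enumerate}
To make $\kappa$ an absolute constant, the paper splits $[M,2M/p]\cup[2n-2M/p,2n-M]$ dyadically into blocks $Y_j$ paired with $X_j=[2^{j+1}M]\cup[n-2^{j+1}M,n]$, so each $(X_j,Y_j)$ is sum-regular with $\kappa=1/8$. It pigeonholes $d_j=\max\{d/2^{j+2},d/(2\log d)\}$ missed elements into some $Y_j$ and sums the geometrically decaying bounds $(1-p)^{|X_j|/2^{10}}$ from Lemma~\ref{lem:missing_prob_sum_regular}. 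The range beyond $2M/p$ is handled by single-element bounds via \cite[Lemma 4.3]{campos2022probabilistic}, which is close in spirit to your decay-in-$y_1$ idea.
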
   

The idea of the proof is to consider events of the form $|Y_j \setminus (A + A)| \ge d / 2^j$, where $Y_j = [2^j M, 2^{j+1}M] \cup [2n - 2^{j+1} M, 2n - 2^j M]$. The main point of such a dyadic partition is that each $Y_j$ enjoys a certain regularity property. Namely, each $y \in Y_j$ can be represented in roughly the same number of ways as $x + x'$ for $x, x' \in [n]$. This property is exploited in Lemma \ref{lem:sum_regular_container}, which we use to prove the following lemma.

\begin{definition}
    Let $X, Y \subseteq \mathbb{N}$ be finite sets. We say that the pair $(X, Y)$ is \emph{$\kappa$-sum-regular}, for some $\kappa > 0$, if for each $y \in Y$ we have $|\mS(X, \{y\})| \ge \kappa |X|$.
\end{definition}

\begin{lem} \label{lem:missing_prob_sum_regular}
    For every $0 < \kappa \le 1$ there exists $C \ge 1$ such that the following holds. Let $d \in \mathbb{N}$ and let $X, Y \subseteq \mathbb{N}$ be finite sets such that $(X, Y)$ is $\kappa$-sum-regular, $d \ge 2 \sqrt{|X|} \ge C$ and $|Y| \geq 2d$. Let $A \subseteq X$ be a $p$-random subset, where 
    $$
        \frac{C \log(|Y|/d)}{\sqrt{|X|}} \le p \le 1/2.
    $$
    Then
    $$
        \Pr\bigg( \big|Y \setminus (A+A)\big| \ge d \bigg) \le (1 - p)^{\kappa |X| / 2^7}.
    $$
\end{lem}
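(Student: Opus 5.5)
We will follow the template of Lemma~\ref{lemma:prob_many_missing}: first prove a container statement, then take a union bound over the containers. The paper announces that the container statement is Lemma~\ref{lem:sum_regular_container}. Concretely, we aim for the following claim.

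\emph{Claim.} If $(X,Y)$ is $\kappa$-sum-regular, $A\subseteq X$ satisfies $|Y\setminus(A+A)|\ge d$, and $|A|\ge s:=C'\sqrt{|X|}\log(|Y|/d)$, then there are $F\subseteq A$ with $|F|\le s$ and $Q=Q(F)\subseteq X$ with $|Q|\ge \kappa|X|/2^6$ and $A\cap Q=\emptyset$. Moreover, $Q$ depends only on $F$.

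To build $F$ we run a greedy procedure. Start with $W_0=Y$, and let $W_i$ denote the set of $y\in Y$ not yet covered by $F_i+F_i$. At each step, choose $a\in A\setminus F$ maximising $|\mS_a(X,W_i)|$ (tie-break by the smallest element) and add it to $F$. Stop when $|W_i|\le 2d$ or after $s$ steps. Let $Q$ be the set of $x\in X$ with $|\mS_x(X,W)|$ above the current greedy threshold. Then $A\cap Q=\emptyset$ automatically, and $Q$ is determined by $F$, as in Lemma~\ref{lem:robust_pair_container}.

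The key counting input is sum-regularity. Any $W\subseteq Y$ of size at least $d$ satisfies $\sum_x |\mS_x(X,W)|\ge \kappa|X||W|$, since every $y\in W$ has at least $\kappa|X|$ representations. So if $Q$ is small, say $|Q|<\kappa|X|/2^6$, then some element of $A$ has $|\mS_a(X,W)|\gtrsim \kappa|W|$. This is only useful if the new sums $a+x$ have $x\in F$, i.e. if $F$ is already large. To handle this, we split the procedure into two phases as in Lemma~\ref{lem:robust_pair_container}.

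In phase one, pick $\sqrt{|X|}$ elements. If $Q$ stays small, then by the pigeonhole argument of Case~2 there, with $B$ the set of pairs $(x,a+x)$, the pairs concentrate on few $x$'s. Picking a further $\sqrt{|X|}$ elements then makes $F+F$ cover a constant fraction $\Omega(\kappa)$ of the current $W$. Each double phase therefore either produces $|Q|\ge\kappa|X|/2^6$, in which case we stop, or shrinks $|W|$ by a factor $1-c\kappa$. After $O(\kappa^{-1}\log(|Y|/d))$ phases we have $|W|<d$. That would contradict $|Y\setminus(A+A)|\ge d$, because $W\supseteq Y\setminus(A+A)$. Hence the procedure must stop with a large $Q$ and $|F|\le C'\sqrt{|X|}\log(|Y|/d)$. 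The hypothesis $d\ge2\sqrt{|X|}$ ensures that the thresholds remain meaningful throughout.

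Then the probability computation goes as in Lemma~\ref{lemma:prob_many_missing}. We have
\[
\Pr(|A|<s)\le (1-p)^{|X|/4}
\]
by Chernoff, using $p|X|\ge C\sqrt{|X|}\log(|Y|/d)\gg s$. The union bound gives
\[
\sum_{|F|\le s}p^{|F|}(1-p)^{|Q|}\le \left(\frac{e|X|p}{s}\right)^{s}(1-p)^{\kappa|X|/2^6}.
\]
We then check that $(e|X|p/s)^s(1-p)^{\kappa|X|/2^7}\le 1$ once $p\ge C\log(|Y|/d)/\sqrt{|X|}$ with $C\gg C'/\kappa$. Indeed $(1-p)^{\kappa|X|/2^7}\le e^{-p\kappa|X|/2^7}$, and $p|X|$ dominates $s\log(e|X|p/s)$.

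The main obstacle is the container step, specifically guaranteeing a multiplicative shrinking of $W$ in each phase when $Q$ is small. The difficulty is that $\mS_a(X,W)$ counts sums $a+x$ with arbitrary $x\in X$, not with $x\in F$. The two-phase trick from Lemma~\ref{lem:robust_pair_container} is what converts large degrees into actual coverage by $F+F$. We expect the constants to need care there, and a $\log$ factor in $|F|$ rather than a constant factor.
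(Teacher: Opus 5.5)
Your proposal is correct. Its outer layer matches the paper: a container claim with $|F|=O_\kappa(\sqrt{|X|}\log(|Y|/d))$ and $|Q|=\Omega(\kappa|X|)$, then Chernoff for small $|A|$ plus a union bound. The difference is in how the container is built.

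You iterate a two-phase round modelled on Lemma~\ref{lem:robust_pair_container} on the shrinking uncovered set $W\subseteq Y$. Each round ends either with a large $Q$ (Case~1 or Case~2(b)), or with $F+F$ covering an $\Omega(\kappa)$-fraction of $W$ (Case~2(a)). The latter can occur only $O(\kappa^{-1}\log(|Y|/d))$ times, because $W\supseteq Y\setminus(A+A)$ never drops below $d$. This is the scheme of Lemma~\ref{lemma:many_missing_container}. It works here because sum-regularity of $(X,W)$ is inherited from $(X,Y)$ for free, so no analogue of Lemma~\ref{lemma:sum_robust_param} is needed.

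The paper instead front-loads all the covering into Phase~I. It repeatedly removes the lexicographically smallest $F'\subseteq A$ with $|F'|\le 2\sqrt{|X|}$ and $|(F'+F')\cap Y_0|\ge\kappa|Y_0|/16$, until none exists. It then runs Phases~II--III once on the fixed pair $(X\setminus F_0,Y_0)$. Since no small subset of $A$ covers much of $Y_0$ any more, the coverage outcome cannot occur in Phase~III, and a single run always produces $Q$.

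What each route buys:
\begin{itemize}
\item The paper gets a one-shot analysis, with all earlier choices removed from $X$ at once, at the price of a non-greedy search in Phase~I.
\item Your route stays purely greedy but needs per-round bookkeeping. Thresholds must scale with the current $|W|$: a cap $|W|/\sqrt{|X|}$ on $|B(x,*)|$, which is where $d\ge 2\sqrt{|X|}$ is used. No $Y$-side cap is needed, since $|B(*,y)|\le\sqrt{|X|}$ trivially.
\item In your route, $Q$ must also exclude all previously chosen elements. These lie in $A$, so ``$A\cap Q=\emptyset$ automatically'' is not literally true as you wrote it. Their number is absorbed by requiring $|F|\ll\kappa|X|$, i.e.\ $C'\gtrsim\kappa^{-2}$, using $|X|\ge|A|\ge s$.
\end{itemize}

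There are two small slips in the final estimate.
\begin{itemize}
\item Summing over $|F|\le s$ costs an extra factor $s+1$. Alternatively, pad $F$ to size exactly $s$, as the paper does.
\item Proving $(e|X|p/s)^s(1-p)^{\kappa|X|/2^7}\le 1$ leaves no room to add the Chernoff term. Prove the ``$\le 1$'' inequality with an exponent slightly below $\kappa|X|/2^7$; the paper uses $(1-p)^{\kappa|X|/100}$ as the intermediate target.
\end{itemize}
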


We postpone the proof of Lemma \ref{lem:missing_prob_sum_regular} to the next section. Now, we use it to prove Lemma \ref{lemma:missing_few_elements}.

\begin{proof}[Proof of Lemma \ref{lemma:missing_few_elements}]
    Recall that $M = Kd$. Note that we can assume that $n > M$, as otherwise the statement is vacuous. By \cite[Lemma 4.3]{campos2022probabilistic}, we have
    \[
    \Pr\bigg( (2M/p, 2n - 2M/p) \not \subseteq A + A \bigg) \le \frac{8}{p^2} (1 - p^2)^{M/(2p)} \leq  \frac{8}{p^2} (1 - p)^{M/4},
    \]     
    where the last inequality follows from $1-p^2 \leq (1-p)^{p/2}$ for $0 \leq p \leq 1$. Note that the previous statement vacuously holds for $2M/p \ge n$. 
        
    For an integer $j \ge 0$, set $Y_j = [2^j M, 2^{j+1}M) \cup (2n - 2^{j+1}M, 2n - 2^j M]$ and $X_j = [2^{j+1}M] \cup [n - 2^{j+1}M, n]$. Observe that $(X_j, Y_j)$ is $(1/4)$-sum-regular for every $j \ge 0$. We now distinguish two cases:
    \begin{itemize}
        \item If $2M / p < n$, let $k = \lfloor \log_2 (2p^{-1}) \rfloor -1 $, and set 
        $$
            Y_k := [2^k M , 2M/p] \cup [2n - 2M /p, 2n - 2^k M] \quad \text{and} \quad X_k = [M/p] \cup [n - M/p, n].
        $$
        \item Otherwise, let $k < \lfloor \log_2(2p^{-1})\rfloor$ be the largest integer so that $[2^{k+1} M, 2n - 2^{k+1} M] \neq \emptyset$, and set
        $$ 
            Y_k := [2^k M, 2n - 2^k M] \quad \text{and} \quad X_k = [n].
        $$        
    \end{itemize}
    Because of the assumption $p \geq K^2 \log^2 d/d$, we have $k < \log d$. In either case, the choice of $k$ and corresponding $Y_k$ ensures that $Y_0, \ldots, Y_k$ form a partition of $[M, 2M/p] \cup [2n - 2M/p, 2n - M] =: I$. Note that $(X_k, Y_k)$ is $(1/8)$-sum-regular.    
    Set $d_j = \max \{ d/2^{j+2}, d/(2\log d)\}$. Observe that if 
    $$
        \bigg| I \setminus (A + A) \bigg| \ge d,
    $$
    then by the pigeonhole principle (recall $k < \log d$) there exists $j \in \{0, \ldots, k\}$ such that 
    \begin{equation} \label{eq:Y_j_misses}
        |Y_j \setminus (A + A)| \ge d_j .
    \end{equation}
    Note that $Y_j \setminus (A + A) = Y_j \setminus (A_j + A_j)$, where $A_j = A \cap X_j$. By Lemma \ref{lem:missing_prob_sum_regular} applied with the pair $(X_j, Y_j)$, we have  
    \[\Pr\bigg( \big|Y_j \setminus (A_j+A_j) \big| \ge d_j \bigg) \le (1 - p)^{|X_j| /2^{10}} \]
    Let us briefly show that the use of Lemma \ref{lem:missing_prob_sum_regular} is indeed justified. Consider some $j \in [0,k]$:
    \begin{itemize}      
        \item Recall that $(X_j, Y_j)$ is $(1/8)$-sum-regular, often with room to spare.
        \item Since $|X_j| \geq 2^{j+2} M \geq 2^{j+2} K_0$, we pick $K_0$ large enough so that $|X_j| \geq C$, where $C$ is the constant given by the lemma for $\kappa = 1/8$. 
        \item $|Y_j| \geq 2M (2^{j+1} - 2^j) = 2^{j+1} M \ge 2d \ge 2d_j$.
        \item We now verify $d_j \geq 2 \sqrt{|X_j|}$. Note that $|X_j| =  2^{j+2} M$ for all $j < k$, and $|X_k| \le 2M/p$. If $j < \log \log d$ then $d_j = d/2^{j+2} \ge d /(2 \log d)$, thus the inequality holds whenever $d/\log^{3/2} d \geq 16 \sqrt{M}$. This is equivalent to $d/\log^3 d \geq 2^8 K$, which holds by the assumption of the lemma. If $\log \log d \leq j \le k \le \log(2p^{-1})$, then the inequality holds if $p \geq 2^6 M \log^2 d/ d^2$. Again using that $M = Kd$, this follows from $p \geq K^2 \log^2 d/d$. 
        \item Lastly, we need $p \geq C \log(|Y_j|/d_j)/\sqrt{|X_j|}$, where $C$ is the constant from Lemma \ref{lem:missing_prob_sum_regular} corresponding to $\kappa = 1/8$. Since $|Y_j| \leq 2 (2^{j+2} M - 2^j M) \leq  2^{j+3} M$, $|X_j| \geq 2^{j+2} M$, and $d_j \geq d/2^{j+2}$, it suffices to show that
        \[ p \geq \frac{C \log (2^{2j+5} M/d)}{2 \sqrt{2^j M}}. \] 
        Assuming $K = M/d \ge K_0$ is large enough, this is a decreasing function in $j \ge 0$. Thus, this is satisfied as  $p \geq K \log K / \sqrt{M}$ and $K_0$ is sufficiently large with respect to $C$. 
    \end{itemize}
    
    Therefore, 
    $$
        \Pr\bigg( \big|I \setminus (A+A) \big| \ge d \bigg) \le \sum_{j = 0}^k \Pr\bigg( \big|Y_j \setminus (A_j+A_j) \big| \ge d_j \bigg) \le \sum_{j = 0}^k (1 - p)^{|X_j|/2^{10}} \le (1 - p)^{M/2^{10}}.
    $$
    Finally, we conclude
    $$
          \Pr\bigg( \big| [M, 2n - M] \setminus (A + A) \big| \ge d \bigg) \le \Pr\bigg( ([M, 2n - M] \setminus I) \not \subseteq A + A \bigg) + \Pr\bigg( \big|I \setminus (A+A) \big| \ge d \bigg) \le (1 - p)^{M/2^{11}}.
    $$

\end{proof}

\subsection{Sum-regular sets}

The following lemma is the key ingredient in the proof of Lemma \ref{lem:missing_prob_sum_regular}. It is an analogue of Lemma \ref{lem:robust_pair_container}, and while the proofs are similar there are certain important differences.

\begin{lem} \label{lem:sum_regular_container}
    For every $0 < \kappa  \leq 1$, there exists $L > 0$ such that the following holds. Let $(X, Y)$ be a $\kappa$-sum-regular pair, for some $X, Y \subseteq \mathbb{N}$ with  $|X| \ge L$. Let $2 \sqrt{|X|} \leq d \leq |Y|/2$ be an integer, and suppose $A \subseteq X$ is such that
    $$
        |A| \geq L \log (|Y|/d) \sqrt{|X|} \quad \text{ and } \quad |Y \setminus (A + A)| \ge d.
    $$
    Then there exist $F \subseteq A$ and $Q = Q(F) \subseteq X$, such that 
    $$
        |F| = L \log(|Y| / d) \sqrt{|X|}, \quad |Q| \ge \kappa |X| / 64 \quad \text{ and } \quad A \cap Q = \emptyset.
    $$
    Importantly, $Q$ depends solely on $F$ and not on the whole set $A$.
\end{lem}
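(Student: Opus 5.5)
The plan is to run an iterated version of the two-phase greedy procedure from the proof of Lemma~\ref{lem:robust_pair_container}. The role of sum-robustness is now played by $\kappa$-sum-regularity, and each round either produces the set $Q$ or shrinks a set of ``candidate missing sums'' by a constant factor.

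We maintain sets $F_i \subseteq A$ and $Y_i \subseteq Y$, starting from $F_0 = \emptyset$ and $Y_0 = Y$. They satisfy $Y_i = Y \setminus (F_i + F_i)$, so in particular $Y \setminus (A+A) \subseteq Y_i$ and hence $|Y_i| \ge d$ throughout. Given that $|Y_{i+1}| \le (1-c)|Y_i|$ in every round that does not stop, there are at most $O(\log(|Y|/d))$ rounds. If each round adds $O(\sqrt{|X|})$ elements to $F$, then $|F| \le L\log(|Y|/d)\sqrt{|X|}$, and we pad $F$ with the smallest remaining elements of $A$ as in the proof of Lemma~\ref{lemma:many_missing_container}. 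All choices are greedy with smallest-element tie-breaking. Consequently, rerunning the procedure on any $A'$ with $F \subseteq A' \subseteq A$, in particular on $A' = F$, reproduces the same $F_i$, $Y_i$ and auxiliary sets, so $Q$ depends only on $F$.

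A single round on $Y_i$ goes as follows. Let $s = \sqrt{|X|}$ and let $\tau$ be a degree cap of order $|Y_i|/\sqrt{|X|}$ on the $Y$-side and of order $\sqrt{|X|}$ on the $X$-side. For $s$ steps, pick $a \in A \setminus F$ maximising $|\mS_a(X \setminus R_X, Y_i \setminus R_Y)|$, add the pairs $(x, a+x)$ to $B$, and update $R_X, R_Y$ as the high-degree vertices, exactly as before. This is where we use that every $y \in Y_i$ has at least $\kappa|X|$ representations. Deleting $R_Y$ costs little provided $|R_Y| \le \kappa |Y_i|/8$, and deleting $R_X$ together with a putative small $Q$ removes at most $|R_X \cup Q| \cdot |Y_i|$ incidences. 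Hence $|\mS(X\setminus(R_X\cup Q\cup F), Y_i\setminus R_Y)| \gtrsim \kappa|X||Y_i|$ whenever $|R_X|,|Q| \le \kappa|X|/16$.

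In Case~1, $|B|$ is small. Then every remaining $a$ has $|\mS_a| < c\kappa|Y_i|$. We set $Q = \{x : |\mS_x(X\setminus R_X, Y_i\setminus R_Y)| \ge c\kappa |Y_i|\}$, and the double count above forces $|Q| \ge \kappa|X|/64$, which lets us stop. In Case~2, $|B| \gtrsim \kappa |X|^{1/2}|Y_i|$. We run the second greedy phase of $s$ more steps, choosing $a$ maximising $|B(a,*)\setminus B(F,*)|$. Either $|B(F,*)| \ge c|Y_i|$, in which case we set $Y_{i+1} = Y_i \setminus (F+F)$ and get the desired shrinkage, or the complementary argument with $B'$ yields $Q$ with $|Q| \ge \kappa|X|/64$ and $Q \cap A = \emptyset$, and we stop.

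The main obstacle will be choosing the degree caps so that everything scales correctly with $|Y_i|$, which shrinks from round to round. The caps must keep $|R_X|$ and $|R_Y|$ small relative to $|X|$ and $|Y_i|$ respectively, using $|B| \le s \cdot |Y_i|$ and $|B| \le s\cdot |X|$. They must also make the Case~2(b) computation $|B'| \ge |B| - |B(F,*)|\cdot(\text{cap})$ leave a constant fraction of $B$. This is exactly where the hypothesis $|Y_i| \ge d \ge 2\sqrt{|X|}$ enters: a $Y$-side cap of about $|Y_i|/\sqrt{|X|} \ge 1$ must be meaningful. The hypothesis $|X| \ge L$ absorbs the rounding. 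I expect to need care in verifying that the constant $1/64$, with $\kappa$ inside, survives these losses.
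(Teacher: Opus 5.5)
Your proposal is correct, but it organises the argument differently from the paper.

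\textbf{How the paper does it.} The paper does not iterate the greedy. It first runs a separate Phase~I: while some $F' \subseteq A$ with $|F'| \le 2\sqrt{|X|}$ has $|(F'+F')\cap Y_0| \ge \kappa|Y_0|/16$, it takes the lexicographically smallest such $F'$, adds it to $F_0$, and shrinks $Y_0$. After $O_\kappa(\log(|Y|/d))$ steps, no small subset of $A$ covers a $\kappa/16$-fraction of $Y_0$. The two-phase greedy is then run once, on the $(3\kappa/4)$-sum-regular pair $(X\setminus F_0, Y_0)$. Your Case~2(a) cannot occur there: the greedy's own $F$ has $|F| \le 2\sqrt{|X|}$, so $|(F+F)\cap Y_0| < \kappa|Y_0|/16$ by the termination of Phase~I.

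\textbf{How yours differs.} You instead let each greedy round act as its own shrinking witness. This is exactly how Lemma~\ref{lem:robust_pair_container} is iterated inside Lemma~\ref{lemma:many_missing_container}. The paper's route buys a single, static analysis of the greedy, at the price of a brute-force lexicographic search; that search is harmless in a container argument. Your route keeps every choice greedy and reuses the sum-robust template. The price is checking that nothing degrades across rounds, and nothing does: every threshold is proportional to $|Y_i|$ or to $|Y_i|/\sqrt{|X|}$, and $|Y_i| \ge d \ge 2\sqrt{|X|}$ throughout.

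\textbf{Three points to pin down when you write it out.}
\begin{enumerate}
\item The cap defining $R_X$ must be on $|B(x,*)|$, at about $|Y_i|/\sqrt{|X|}$. This is what gives $|R_X| \lesssim \kappa|X|$ in Case~1 and $|Q| \gtrsim \kappa|X|$ in Case~2(b). The Case~2(a) threshold must be a small multiple of $\kappa|Y_i|$; the paper uses $\kappa|Y_0|/16$. You do not need $R_Y$ at all, since $|B(*,y)| \le \sqrt{|X|}$ holds automatically.
\item Elements chosen in earlier rounds lie in $A$, but the current round's greedy maximality does not control them. You must therefore keep them out of $Q$. The simplest way is to run round~$i$ inside $X\setminus F_i$, as the paper does with $X_0 = X\setminus F_0$.
\item That in turn needs $|F_i| \le \kappa|X|/4$, so that $(X\setminus F_i, Y_i)$ stays $(3\kappa/4)$-sum-regular. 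This follows from $|F_i| = O_\kappa(\log(|Y|/d)\sqrt{|X|})$ together with $L\log(|Y|/d)\sqrt{|X|} \le |A| \le |X|$, once $L$ is large compared with $1/\kappa$ times the per-round constant. This matches the paper's choice $L \ge 4L'/\kappa$.
\end{enumerate}
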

\begin{proof}    
    Consider some $A$ which satisfies the assumption of the lemma. We construct the set $F$ and, along the way, the accompanying set $Q$, using a three-phase procedure. 

    \paragraph{Phase I.} Set $Y_0 = Y$ and $F_0 = \emptyset$. As long as there exists a set $F' \subseteq A$ of size $|F'| \le 2\sqrt{|X|}$ such that $|(F' + F') \cap  Y_0| \ge \kappa |Y_0| / 16$, take the lexicographically smallest such $F'$ and set $F_0 = F_0 \cup F'$ and $Y_0 := Y_0 \setminus (F_0 + F_0)$. 
    
    Since $|Y_0| \ge |Y \setminus (A + A)| \ge d$ and $|Y_0|$ decreases by a multiplicative factor of $1 - \kappa/16$ in each step, at the end of Phase~I we have $|F_0| \le L' \log(|Y|/d) \sqrt{|X|}$, where $L'$ depends only on $\kappa$. As $|X| \ge |A| \ge L \log(|Y|/d) \sqrt{|X|}$ where, say, $L \ge 4L' / \kappa$, we conclude $|F_0| \le  \kappa |X|/4$. Therefore, the pair $(X_0, Y_0)$, $X_0 = X \setminus F_0$, is $(3\kappa/4)$-sum-regular. Furthermore, $|X_0| \geq |X| -\kappa |X| /4 \geq 3|X|/4$. 
    Note, again, that $|Y_0| \ge d \geq 2 \sqrt{|X|}$, where the second inequality follows from the assumption of the lemma. 
    
    \paragraph{Phase II.} Set $F, \widehat X, B := \emptyset$. Repeat the following for $\sqrt{|X_0|}$ steps:
    \begin{itemize}
        \item Let $a \in A \setminus (F \cup F_0)$ 
        be the element maximising $|\mS_a(X_0 \setminus \widehat X, Y_0)|$, tie-breaking by taking the smallest one.
        \item Add $a$ to $F$, and for each $x \in \mS_a(X_0 \setminus \widehat X, Y_0)$, add the pair $(x, a + x)$ to $B$.
        \item Set $\widehat X := \left\{ x \in X_0 \colon |B(x, *)| \ge |Y_0| / \sqrt{|X_0|} \right\}$.         
    \end{itemize}
    Note that once an element $x$ becomes a part of $\widehat X$, it stays in $\widehat X$ until the end of the procedure and no more pairs $(x, y)$ are added to $B$. As $|Y_0| \geq d \geq 2 \sqrt{|X|}$, we have, with room to spare,
    \begin{equation} \label{eq:B_x}
        |B(x, *)| \le 2 |Y_0| / \sqrt{|X_0|} \quad \text{ for $x \in X_0$.}
    \end{equation}
    Moreover, since for any $y \in Y_0$ at most one pair $(x, y)$ gets added to $B$ in each iteration, we also have $|B(*, y)| \leq \sqrt{|X_0|}$. We distinguish two cases.
    
    \paragraph{Case 1: $|B| < \kappa \sqrt{|X_0|} \; |Y_0| / 4$.} In this case, set 
    $$ 
        Q: = \{x \in X_0 \setminus F  \colon |\mS_x (X_0 \setminus \widehat X, Y_0)| \geq \kappa |Y_0| / 4\}.
    $$
    Note that $Q$ and $F_0$ are disjoint, because we are considering elements in $X_0$. Furthermore, the upper bound on $|B|$ implies that for every $a \in A \setminus (F \cup F_0)$ we have $|\mS_a(X_0 \setminus \widehat X, Y_0)| < \kappa |Y_0| / 4$. Therefore, $A \cap Q = \emptyset$. From the upper bound on $|B|$ we also get $|\widehat X| < \kappa |X_0| / 4$, thus, using the sum-regularity of $(X_0, Y_0)$, 
    $$
        |\mS(X_0 \setminus \widehat X, Y_0)| \geq |\mS(X_0,Y_0)| - |\widehat X| |Y_0| \geq  \kappa |X_0| |Y_0|/2.
    $$
    Furthermore, since 
    $$  
       \kappa |X_0| |Y_0|/2  < |\mS(X_0 \setminus \widehat X, Y_0)| \leq \left(|X_0| - |F| - |Q| \right) \kappa |Y_0| / 4 + \left(|F| + |Q|\right) |Y_0|,
    $$
    and $|F| = \sqrt{|X_0|} < \kappa |X_0|/8$, we conclude 
    $$
        |Q| \geq \kappa |X_0|/8 \geq \kappa|X|/32.
    $$
    The process stops here as we have found the desired set $Q$. 

    \paragraph{Case 2: $|B| \ge \kappa \sqrt{|X_0|} \; |Y_0| / 4$:} Proceed to the next phase.

    \paragraph{Phase III.} Suppose now that $|B| \ge \kappa \sqrt{|X_0|} \; |Y_0| / 4$. Repeat the following for an additional $\sqrt{|X_0|}$ steps: take $a \in A \setminus (F \cup F_0)$ to be an element which maximises $|B(a, *) \setminus B(F, *)|$, tie-breaking by taking the smallest integer, and add it to $F$. 

    Note that $y \in B(F, *)$ implies $y \in F + F$. By the construction of $B$, we also have $y \in Y_0$. Therefore, $|B(F, *)| \le |(F + F) \cap Y_0|$. As $|F| = 2 \sqrt{|X_0|} \leq 2 \sqrt{|X|}$, and because $F$ did not get removed in phase I, 
    $$
        |B(F, *)| \leq |(F + F) \cap Y_0| < \kappa |Y_0| / 16. 
    $$
    Set $B' = B \setminus (X_0 \times B(F, *))$, and note that for every $a \in A \setminus (F \cup F_0)$ we have 
    $$
        |B'(a, *)| < \frac{\kappa |Y_0|}{16 \sqrt{|X_0|}}.
    $$
    Indeed, if this was not the case then the set $B(F, *)$ would have increased by at least $\kappa |Y_0| / (16 \sqrt{|X_0|})$ in each iteration of Phase III. This would result in $|B(F, *)| \ge \kappa |Y_0| / 16$, which is a contradiction. With this in mind, set
    $$
        Q := \left\{x \in X_0 \colon  |B'(x, *)| \ge \frac{\kappa |Y_0|}{16 \sqrt{|X_0|}} \right\}
    $$
    and note that $A \cap Q = \emptyset$ (by the definition, for $f \in F$ we have $|B'(f, *)| = 0$). As $|B(*, y)| \le \sqrt{|X_0|}$ for each $y \in Y_0$, we have
    $$
        |B'| \ge |B| - |B(F, *)| \sqrt{|X_0|} \ge 3 \kappa \sqrt{|X_0|} \; |Y_0| / 16.
    $$
    From this and \eqref{eq:B_x}, we conclude
    $$
        3 \kappa \sqrt{|X_0|} \; |Y_0| / 16 \le |B'| \le \left(|X_0| - |Q|\right) \frac{\kappa |Y_0|}{16 \sqrt{|X_0|}} +  |Q| \frac{2|Y_0|}{\sqrt{|X_0|}},
    $$
    thus
    $$
        |Q| \ge \kappa |X_0| / 16 \geq \kappa |X| /64.
    $$

    Finally, we set $F = F_0 \cup F$, keep adding the smallest integer in $A \setminus F$ until $|F| = L \log (|Y|/d) \sqrt{|X|}$ and note that by repeating the procedure knowing only the final set $F$, we obtain the same set $Q$.
\end{proof}

The proof of Lemma \ref{lem:missing_prob_sum_regular} is almost identical to the proof of Lemma \ref{lemma:prob_many_missing}, with Lemma \ref{lem:sum_regular_container} taking the role of Lemma \ref{lemma:many_missing_container}.

\begin{proof}[Proof of Lemma \ref{lem:missing_prob_sum_regular}]
    Let $L$ be the constant given by Lemma \ref{lem:sum_regular_container}. By choosing $C$ to be sufficiently large, we may assume $L \log(|Y| / d) \sqrt{|X|} \le |X|$ (otherwise there is no valid choice of $p$).
    
    Set 
    $$
        \mQ = \left\{ (F, Q(F)) : F \in \binom{X}{L \log(|Y|/d) \sqrt{|X|}} \right\}, 
    $$
    where $Q(F)$ is the set $Q$ corresponding to $F$, as given by Lemma \ref{lem:sum_regular_container}. Assumptions of the lemma give $|Y| \geq 2d$ and $|X| \geq C^2/2$, for $C$ large enough compared to $L$, hence we can indeed apply Lemma \ref{lem:sum_regular_container}.
    Moreover, $Q(F)$ is only defined for $F$ corresponding to some set $A \subseteq X$ satisfying $|Y \setminus (A+A)| \geq d$ and $|A| \geq L \log(|Y|/d) \sqrt{|X|}$. For all other sets $F$ we set $Q(F) = X$.

    Lemma \ref{lem:sum_regular_container} implies the following upper bound on the probability of the desired event:
    \begin{equation} \label{eq:A_misses_many_again}
        \Pr\bigg( \big| Y \setminus (A+A) \big| \ge d \bigg) \le \Pr\bigg( |A| \le L \log(|Y|/d) \sqrt{|X|}  \bigg) + \Pr\bigg( \bigcup_{(F, Q) \in \mQ} F \subseteq A \text{ and } Q \cap A = \emptyset \bigg).
    \end{equation}
    
    For $C$ large enough compared to $\kappa$, we have $\mathbb{E}[|A|] = |X|p \ge 2 L \log(|Y|/d)  \sqrt{|X|}$, Chernoff's bound implies
    $$
        \Pr\bigg( |A| \le L \log(|Y|/d) \sqrt{|X|} \bigg) \le \Pr\bigg( |A| \le \mathbb{E}[|A|]/2 \bigg) \le e^{-|X|p/8} < (1 - p)^{|X|/16}.
    $$
    
    The second term in \eqref{eq:A_misses_many_again} is estimated with a union bound:   

    \begin{align*}
        \Pr\bigg( \bigcup_{(F, Q) \in \mathcal{Q}} F \subseteq A \text{ and } Q \cap A = \emptyset \bigg) &\le \sum_{(F, Q) \in \mathcal{Q}} p^{|F|} (1 - p)^{|Q|} \leq \sum_{(F, Q) \in \mathcal{Q}} p^{L \log (|Y|/d) \sqrt{|X|}} (1-p)^{\kappa |X|/64} \\
        &\leq \left( \frac{e |X| p}{L \log(|Y|/d) \sqrt{|X|}} \right)^{L \log(|Y|/d) \sqrt{|X|}} (1-p)^{\kappa |X|/64}.
    \end{align*}
    To ensure enough room sum given \eqref{eq:A_misses_many_again}, we shall show that the above expression is upper bounded by $(1-p)^{\kappa |X|/100}$. Thus, we need to show that
    \[ \left( \frac{e |X| p}{L \log(|Y|/d) \sqrt{|X|}} \right)^{L \log(|Y|/d) \sqrt{|X|}} (1-p)^{ 
  3^2  \kappa |X|/40^2} \leq 1.\]
    This is indeed true, since the left-hand side is decreasing in $p$, and by choosing $C$ large enough for $p = C \log(|Y|/d)/ \sqrt{|X|}$, the result follows. 
    
\end{proof}

\section{Proof of Theorem \ref{thm: missing naturals}}
\label{sec:final}

Instead of showing Theorem \ref{thm: missing naturals} directly, we shall establish its finitary version. We believe that this version lends itself more easily for further applications. As we will see shortly, it quickly implies Theorem~\ref{thm: missing naturals}.

\begin{thm} \label{thm:interval_probability}
    There exists $C > 1$ such that the following holds. Let $m,n \in \mathbb{N}$ such that $m \leq 2n/3$, and let $\varepsilon > 0$. Let $A \subseteq [ n ]$ be a $p$-random subset, where
    $$
        \frac{C \log (1/\varepsilon)}{\sqrt{\varepsilon^3 m}} \le p \le 1/2. 
    $$
    Then, 
    $$  
        \Pr\bigg( \big| [ 2n]  \setminus (A+A) \big| \ge m  \bigg) < (1-p)^{\frac{m}{2} - \varepsilon m}. 
    $$
\end{thm}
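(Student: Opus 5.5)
We split $[2n]$ into two end intervals and a middle part, as announced in the introduction. We fix a large constant $K$ (at least the $K_0$ of Lemma~\ref{lemma:missing_few_elements}) and set $d_0 = \varepsilon m/K$, $M = K d_0 = \varepsilon m$. Actually, to keep the middle contribution negligible we want the middle exponent $M/2^{11}$ to exceed $m$, so we take $M = Km$ with $K \ge 2^{12}$ and missing threshold $d = M/K = m$ scaled down: we apply Lemma~\ref{lemma:missing_few_elements} with $d' = \varepsilon m$ and $K' = M/d' = K/\varepsilon$. This gives
$$\Pr\big(|[M,2n-M]\setminus(A+A)|\ge \varepsilon m\big)\le (1-p)^{Km/2^{11}}\le (1-p)^{m},$$
provided the hypotheses hold. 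These are $d'/\log^3 d' \ge 2^8 K'$ and $p \ge \max\{K'\log K'/\sqrt{M},\, K'^2\log^2 d'/d'\}$. With $K'\sim 1/\varepsilon$, the first $p$-condition is $\log(1/\varepsilon)/\sqrt{\varepsilon^2 m}$, which is fine. The second is about $\log^2(\varepsilon m)/(\varepsilon^3 m)$, which is \emph{not} implied by $p \ge C\log(1/\varepsilon)/\sqrt{\varepsilon^3 m}$ when $m$ is large. To repair this, we will handle the middle region more cleverly: run the dyadic argument only up to where $p$ is large. Alternatively, we note that the union-bound step of Lemma~\ref{lemma:missing_few_elements} only needs that hypothesis for the top levels, and increase the threshold $d_j$ there. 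We expect this parameter reconciliation to be the main obstacle. If it fails, we fall back to the observation that for $m \ge (\log m)^{4}/\varepsilon^{c}$ the condition follows anyway, and that smaller $m$ forces $p>1/2$.

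If $M \ge n$, the middle region is empty and only the end step below is needed, with $X=[n]$.

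For the ends, we set $X_1 = [1, Km]$ and $X_2 = [n-Km, n]$. Sums of elements of $A$ landing in $[2, 2Km]$ use only $A\cap X_1$, and symmetrically at the top end. If $|[2n]\setminus(A+A)|\ge m$ and the middle misses fewer than $\varepsilon m$, then the two ends together miss at least $(1-\varepsilon)m - 1$ elements of $X_i+X_i$. Say the lower end misses $d_1$ and the upper misses $d_2$. The events for $A\cap X_1$ and $A\cap X_2$ are independent when $Km < n/2$; if they overlap, we instead treat $[n]$ as one interval via Lemma~\ref{lemma:prob_many_missing} with $X=[n]$ and $T=n/m \le$ a constant times $K$, which is allowed because $m \le 2n/3$ gives $T\ge 3/2$. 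In the disjoint case, we pigeonhole over values $d_1 \in \{0, \varepsilon m, 2\varepsilon m, \dots\}$, which gives $O(1/\varepsilon)$ pairs with $d_1+d_2 \ge (1-2\varepsilon)m$. For each pair we apply Lemma~\ref{lemma:prob_many_missing} to both ends with $T = Km/d_i$. This gives the bound
$$(1-p)^{d_1/2 - 2\varepsilon' d_1}(1-p)^{d_2/2-2\varepsilon' d_2}\le (1-p)^{m/2 - O(\varepsilon) m}.$$
Here $\varepsilon'$ must be chosen so that $\varepsilon' d_i \lesssim \varepsilon m$. The case $d_i < \varepsilon m$ is treated as trivial, bounding the probability by $1$.

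The hypothesis of Lemma~\ref{lemma:prob_many_missing} requires $p \ge CT\log(T/\varepsilon')/\sqrt{\varepsilon'^3 d_i}$. Since $d_i \ge \varepsilon m$ and $T \le K/\varepsilon$, this becomes polynomially worse in $\varepsilon$ than $\varepsilon^{-3/2}$. We would therefore tune the choice by taking $X_i$ of length $Td_i$ with $T$ constant, and absorb the leftover portions of the end intervals into the middle estimate. Finally, we sum the $O(1/\varepsilon)$ terms, together with the middle term and the Chernoff-type error terms. The polynomial factor $O(1/\varepsilon)$ is absorbed by $(1-p)^{\varepsilon m}$, because $p\varepsilon m \gg \log(1/\varepsilon)$ under the hypothesis. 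Rescaling $\varepsilon$ by a constant then yields $(1-p)^{m/2-\varepsilon m}$.
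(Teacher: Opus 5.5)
Your decomposition is the one the paper uses. The middle $[M,2n-M]$ with $M=\Theta(m)$ goes through Lemma~\ref{lemma:missing_few_elements} with $d=\Theta(\varepsilon m)$ and $K=\Theta(1/\varepsilon)$, giving $(1-p)^{\ge m}$. The two ends go through Lemma~\ref{lemma:prob_many_missing}, independence, and a union bound over $(d_1,d_2)$, and the case $M\ge n$ is Lemma~\ref{lemma:prob_many_missing} on $[n]$. However, the argument does not close: you stop at two parameter checks and misjudge both, and the workaround you offer for the second does not work. The first one is harmless. The condition $p\ge K'^2\log^2 d'/d'=K^2\log^2(\varepsilon m)/(\varepsilon^3 m)$ gets \emph{easier}, not harder, as $m$ grows; you have the direction reversed. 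It follows from $p\ge C\log(1/\varepsilon)/\sqrt{\varepsilon^3 m}$ as soon as $C\log(1/\varepsilon)\sqrt{\varepsilon^3 m}\ge K^2\log^2(\varepsilon m)$. The left side grows like $\sqrt m$ and the right only like $\log^2 m$, so it suffices to check the smallest admissible $m$. For every admissible $m$, $p\le 1/2$ gives $\sqrt{\varepsilon^3 m}\ge 2C\log(1/\varepsilon)$, which is enough once $C$ is large compared to $K$. No change to the dyadic argument and no case split is needed.

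The real gap is at the ends. The polynomial loss in $\varepsilon$ comes from using a single $\varepsilon'$ for all $d_i$. The missing idea is to let the error parameter of Lemma~\ref{lemma:prob_many_missing} scale like $1/d_i$, exactly as $T$ does. With $T_i=Km/d_i$ and $\varepsilon_i=c\,\varepsilon m/d_i$ you get
\[
\frac{T_i}{\sqrt{\varepsilon_i^3 d_i}}=\frac{K}{c^{3/2}\varepsilon^{3/2}\sqrt m},\qquad \frac{T_i}{\varepsilon_i}=\frac{K}{c\,\varepsilon},\qquad 2\varepsilon_i d_i=2c\,\varepsilon m .
\]
So the requirement on $p$ is independent of $d_i$ and has exactly the form $C\log(1/\varepsilon)/\sqrt{\varepsilon^3 m}$, while each end loses the same $O(\varepsilon m)$ in the exponent. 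This is the paper's choice: $X_1=[M/2]$, $X_2=[n-M/2,n]$, $T_i=M/(2d_i)$, $\varepsilon_i=\varepsilon m/(16d_i)$, with $d_i<\varepsilon m/4$ treated trivially. Your alternative takes ends of length $Td_i$ with constant $T$ and absorbs the leftover $[2Td_i,M)$ into the middle, and this breaks the decomposition. Lemma~\ref{lemma:missing_few_elements} only gives $(1-p)^{M'/2^{11}}$, where $M'$ is the distance of the middle region from the ends. Once that distance drops to $\Theta(T\min_i d_i)$, which can be $\Theta(T\varepsilon m)$, the middle bound is far above $(1-p)^{m/2}$ and no longer pays for that case. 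The middle region would also then depend on $(d_1,d_2)$. Finally, constant $T$ with a uniform $\varepsilon'=\varepsilon$ and $d_i\ge\varepsilon m$ would still need $p\gtrsim\log(1/\varepsilon)/(\varepsilon^{2}\sqrt m)$, which falls short of the stated range.
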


By the same arguments as given in the introduction, the bound in this finitary statement is also optimal in the sense that the upper bound of the probability is optimal, and so is the regime of $p$ for which it applies, if we consider $\varepsilon$ to be constant. It remains an interesting problem to determine the optimal dependence on $\varepsilon$.

We first show that Theorem \ref{thm:interval_probability} implies Theorem \ref{thm: missing naturals}.

\begin{proof}[Proof of Theorem \ref{thm: missing naturals}]
    Let $C'$ be the constant given by Theorem \ref{thm:interval_probability}, and set $C = 8C'$.
    Let $A \subseteq \mathbb{N}$ be a $p$-random subset, where $p \ge C \log(1 / \varepsilon) / \sqrt{\varepsilon^3 m}$, and set $n = 2m/p$. Since $p \leq 1/2$, we indeed have $m \leq 2n/3$. Then
    $$
        \Pr \Bigl( | \mathbb{N} \setminus (A+A) | \geq m \Bigr) \leq \Pr \Bigl( | [ 2n ] \setminus (A+A) | \geq m \Bigr) + \Pr\Bigl( \mathbb{N} \setminus [ 2n ] \nsubseteq A + A\Bigr).
    $$
    By Theorem \ref{thm:interval_probability} applied on $A' = A \cap [n]$ with $\varepsilon/2$ (as $\varepsilon$), we have
    \[ \Pr \Bigl( |[2n] \setminus (A + A)| \ge m \Bigl) \le \Pr \Bigl( | [ 2n ] \setminus (A'+A') | \geq m \Bigr) < (1-p)^{\frac{m}{2} - \varepsilon m / 2}. \]
    For $x \in \mathbb{N}$, let $\mathcal{E}_x$ denote the event $x \not \in A + A$. Then we claim
    \[  \Pr(\mathcal{E}_x) \le \Pr\left( \bigcap_{a = 1}^{\lfloor x/2 \rfloor} (a \not \in A) \cup (x - a \not \in A) \right) \le (1 - p^2)^{(x-1)/2}.\]
    Indeed, if $x$ is odd this is immediate, and when $x$ is even we have for $0 \leq p \leq 1$
    \[ \Pr\left( \bigcap_{a = 1}^{\lfloor x/2 \rfloor} (a \not \in A) \cup (x - a \not \in A) \right) = (1-p^2)^{(x-2)/2} \cdot (1-p) \leq (1-p^2)^{(x-1)/2}.  \]
    Furthermore, using $1 - p^2 \le (1 - p)^{p/2}$ for $0 \le p \le 1$, it follows that
    \begin{equation} \label{eq:bound_Ex}
        \Pr(\mathcal{E}_x) \leq (1 - p)^{p(x-1)/4}.
    \end{equation}
    By a union bound,
    \begin{align*}
        \Pr \Bigl( \mathbb{N} \setminus [ 2n ] \nsubseteq A+A \Bigr)& = \Pr\bigg(\bigcup_{x = 2n+1}^\infty \mathcal{E}_x \bigg) \le \sum_{x = 2n +1}^\infty \Pr ( \mathcal{E}_x ) \\
        &\leq \sum_{x=2n+1}^\infty (1 - p)^{p(x-1)/4} = \frac{(1 - p)^{np/2}}{1 - (1-p)^{p/4}} \le \frac{4}{p^2} (1 - p)^{m} < (1 - p)^{m/2}.
    \end{align*}
    The penultimate inequality follows from $n = 2m/p$ and the fact that $p^2/4 \leq 1 - (1-p)^{p/4}$, for $0 \leq p \leq 1$. To see why the last inequality holds, note that for $C \geq 4$, we have $5 m e^{-C \sqrt{m}/2} \leq C^2$ for any $m >0$. Thus, with a suitable choice for $C$, $(1-p)^{m/2} \leq e^{-pm/2} \leq e^{-C \sqrt{m}/2} \leq C^2/(5m) \leq p^2/5$.
    This implies the theorem.
\end{proof}

\begin{proof}[Proof of Theorem \ref{thm:interval_probability}] 
    Suppose $|[2n] \setminus (A + A)| \ge m$. Let $M = (2^{11} + K_0/2) m$, where $K_0$ is the constant given by Lemma \ref{lemma:missing_few_elements}. We first deal with the case where $M \ge n$. Then, we apply Lemma \ref{lemma:prob_many_missing} for $X = [n] = [Tm]$ for $T < 2^{11} + K_0/2$ and $\varepsilon/2$. Note that the application of this lemma requires $m \leq 2n/3$. Choosing $C$ large enough compared to the constant given by Lemma \ref{lemma:prob_many_missing} and compared to $T$ indeed allows us to do so, and obtain 
    \[ \Pr( |[2n] \setminus (A+A)| \geq m ) \leq (1-p)^{m/2 - \varepsilon m}. \]

    Suppose now that $M < n$. By the pigeonhole principle, we have
    \begin{itemize}
        \item $|\{1, \ldots, M \} \setminus (A+A)| \ge d_1$ and $|\{2n - M + 1, \ldots, 2n\} \setminus (A+A)| \ge d_2$, for some $d_1, d_2 \in \mathbb{N}$ such that $d_1 + d_2 = m - \varepsilon m / 2$, or
        \item $|[M + 1, 2n - M ] \setminus (A + A)| \ge \varepsilon m / 2$.
    \end{itemize}
    To bound the probability of the latter event, we apply Lemma \ref{lemma:missing_few_elements} with $M$, $d = \varepsilon m/2$ and hence $K = (2^{12} + K_0)/\varepsilon \geq K_0$. We are indeed allowed to use the lemma, as we satisfy the three required conditions:
    \begin{itemize}
        \item The condition $d/\log^3 d \geq 2^8 K$, corresponds to 
        \[ \frac{\varepsilon^2 m/2}{\log^3 (\varepsilon m/2)} \geq 2^{20} + 2^8 K_0. \]
        Note that since $C\log(1/\varepsilon)/\sqrt{\varepsilon^3 m} \leq 1$, it suffices to choose $C$ large enough with respect to $K_0$ to ensure that the inequality holds.
        \item Secondly we need 
        \[ p \geq \frac{(2^{12} + K_0) \log((2^{12} + K_0)\cdot \varepsilon^{-1})}{\varepsilon \sqrt{(2^{11} + K_0/2) \cdot  m}}. \]
        This follows from $p \geq C \log(1/\varepsilon) /\sqrt{\varepsilon^3 m}$, and letting $C$ be large enough compared to $K_0$.
        \item The final condition comes down to 
        \[ p \geq \frac{(2^{12} + K_0)^2 \log^2(\varepsilon m/2)}{\varepsilon^3 m/2}. \]
        By the same reasoning as above, this also holds true.
    \end{itemize}
    Thus, the probability of the latter event is at most $(1 - p)^{M / 2^{11}} \le (1 - p)^m$. 
    
    We now estimate the probability of the former event. This is done by a union bound over $d_1, d_2 \in \mathbb{N}$ with stated properties. 
    Consider one such pair of integers $d_1$ and $d_2$. Let $X_1 = [ M/2 ]$ and $X_2 = [n - M/2, n]$, and let $T_i = M / (2d_i)$ and $\varepsilon_i = \varepsilon m / (16 d_i)$. Let $A_i = A \cap X_i$, and note that $A_i$ is a $p$-random subset of $X_i$. If $d_i \ge \varepsilon m / 4$, then by Lemma \ref{lemma:prob_many_missing} we have
    $$
        \Pr\bigg( |(X_i + X_i) \setminus (A_i + A_i)| \geq d_i \bigg) \le (1 - p)^{d_i / 2 - 2 \varepsilon_i d_i} = (1 - p)^{d_i / 2 -  \varepsilon m/8}.
    $$
    We are indeed able to use Lemma \ref{lemma:prob_many_missing} because of the assumption $p \geq C \log(\varepsilon^{-1})/\sqrt{\varepsilon^3 m}$. If $d_i < \varepsilon m / 4$, then we trivially have
    $$    
        \Pr\bigg( |(X_i + X_i) \setminus (A_i + A_i)| \geq d_i \bigg) \le (1 - p)^{d_i / 2 - \varepsilon m/8}.
    $$
    In either case, as $X_1 \cap X_2 = \emptyset$ the events $|(X_1 + X_1) \setminus (A_1 + A_1)| \ge d_1$ and $|(X_2 + X_2) \setminus (A_2 + A_2)| \ge d_2$ are independent, thus
    $$
        \Pr\bigg( |(X_1 + X_1) \setminus (A + A)| \ge d_1 \text{ and } |(X_2 + X_2) \setminus (A + A)| \ge d_2 \bigg) \le (1 - p)^{m/2 - \varepsilon m / 2}.
    $$ 

    Altogether, we have
    $$
        \Pr\bigg( \big| [2n] \setminus (A + A) \big| \ge m \bigg) \le (1 - p)^m + m (1 - p)^{m/2 - \varepsilon m / 2} \le (1 - p)^{m/2 - \varepsilon m}.
    $$    
\end{proof}

\bibliographystyle{abbrv}
\bibliography{references}

@article{campos2020number,
  title={On the number of sets with a given doubling constant},
  author={Campos, Marcelo},
  journal={Israel Journal of Mathematics},
  volume={236},
  number={2},
  pages={711--726},
  year={2020},
  publisher={Springer}
}

@article{liu2024number,
  title={On the number of sets with small sumset},
  author={Liu, Dingyuan and Mattos, Let{\'\i}cia and Szab{\'o}, Tibor},
  journal={Israel Journal of Mathematics},
  note={To appear.}
}

@article{pollard1974generalisation,
  title={A generalisation of the theorem of {C}auchy and {D}avenport},
  author={Pollard, John M},
  journal={Journal of the London Mathematical Society},
  volume={2},
  number={3},
  pages={460--462},
  year={1974},
  publisher={Oxford University Press}
}

@article{campos2022probabilistic,
  title={The typical structure of sets with small sumset},
  author={Campos, Marcelo and Collares, Maur{\'\i}cio and Morris, Robert and Morrison, Natasha and Souza, Victor},
  journal={International Mathematics Research Notices},
  volume={2022},
  number={14},
  pages={11011--11055},
  year={2022},
  publisher={Oxford University Press}
}

@article{freiman1999structure,
  title={Structure theory of set addition},
  author={Freiman, Gregory A},
  journal={Astérisque},
  volume={258},
  pages={1--20},
  year={1999},
  publisher={Centre National de la Recherche Scientifique}
}

@article{ruzsa1994generalized,
  title={Generalized arithmetical progressions and sumsets},
  author={Ruzsa, Imre Z},
  journal={Acta Mathematica Hungarica},
  volume={65},
  number={4},
  pages={379--388},
  year={1994},
  publisher={Springer}
}

@article{green2004cameron,
  title={The {C}ameron--{E}rd{\H{o}}s conjecture},
  author={Green, Ben},
  journal={Bulletin of the London Mathematical Society},
  volume={36},
  number={6},
  pages={769--778},
  year={2004},
  publisher={Oxford University Press}
}

@article{Sapzhenko2003,
author = {Sapozhenko, Alexander},
year = {2003},
pages = {438-441},
title = {The {C}ameron--{E}rd{\H{o}}s Conjecture},
volume = {68},
journal = {Doklady Mathematics}
}

@article{alon2014refinement,
  title={A refinement of the {C}ameron--{E}rd{\H{o}}s conjecture},
  author={Alon, Noga and Balogh, J{\'o}zsef and Morris, Robert and Samotij, Wojciech},
  journal={Proceedings of the London Mathematical Society},
  volume={108},
  number={1},
  pages={44--72},
  year={2014},
  publisher={Oxford University Press}
}

@article{green2016counting,
  title={Counting sets with small sumset and applications},
  author={Green, Ben and Morris, Robert},
  journal={Combinatorica},
  volume={36},
  pages={129--159},
  year={2016},
  publisher={Springer}
}

@article{chang2002polynomial,
  title={A polynomial bound in {F}reiman's theorem},
  author={Chang, Mei-Chu},
  journal={Duke Mathematical Journal},
  volume={115},
  number={1},
  pages={399--419},
  year={2002}
}

@article{schoen2011near,
 author = {Schoen, Tomasz},
 title = {Near optimal bounds in {Freiman}'s theorem},
 fjournal = {Duke Mathematical Journal},
 journal = {Duke Math. J.},
 issn = {0012-7094},
 volume = {158},
 number = {1},
 pages = {1--12},
 year = {2011},
 language = {English},
 doi = {10.1215/00127094-1276283},
 url = {semanticscholar.org/paper/9d84d3f9a1289eb6803754336016d341c034efb3},
 zbMATH = {5904309},
 Zbl = {1242.11074}
}

@article{sanders2012bogolyubov,
  title={On the {B}ogolyubov--{R}uzsa lemma},
  author={Sanders, Tom},
  journal={Analysis \& PDE},
  volume={5},
  number={3},
  pages={627--655},
  year={2012},
  publisher={Mathematical Sciences Publishers}
}

@article{sanders2013structure,
  title={The structure theory of set addition revisited},
  author={Sanders, Tom},
  journal={Bulletin of the American Mathematical Society},
  volume={50},
  number={1},
  pages={93--127},
  year={2013}
}

@article{green2005counting,
  title={Counting sets with small sumset, and the clique number of random {C}ayley graphs},
  author={Green, Ben},
  journal={Combinatorica},
  volume={25},
  pages={307--326},
  year={2005},
  publisher={Springer}
}

@article{cameron1990number,
 author = {Cameron, Peter J. and Erd{\H{o}}s, Paul},
  title={On the number of sets of integers with various properties},
  journal={Number Theory (R. A. Mollin, ed.)},
  pages={61--79},
  year={1990}
}

@article{alon2025random,
  title={Random {C}ayley graphs and random sumsets},
  author={Alon, Noga and Pham, Huy Tuan},
  journal={arXiv preprint arXiv:2509.02561},
  year={2025}
}

\end{document}